\definecolor{blue}{rgb}{0,0,1}
\newcommand{\QQ}{\mathbb{Q}}
\newcommand\be{\begin{equation}}
\newcommand\ee{\end{equation}}
\newcommand\bea{\begin{eqnarray}}
\newcommand\eea{\end{eqnarray}}
\newcommand\bi{\begin{itemize}}
\newcommand\ei{\end{itemize}}
\newcommand\ben{\begin{enumerate}}
\newcommand\een{\end{enumerate}}
\newcommand\bc{\begin{center}}
\newcommand\ec{\end{center}}
\newcommand\ba{\begin{array}}
\newcommand\ea{\end{array}}
\newcommand{\Z}{\ensuremath{\mathbb{Z}}}
\newcommand{\Q}{\mathbb{Q}}
\theoremstyle{definition}
\newtheorem{thm}{Theorem}[section]
\newtheorem{conj}[thm]{Conjecture}
\newtheorem{lem}[thm]{Lemma}
\newtheorem{prop}[thm]{Proposition}
\newtheorem{defn}[thm]{Definition}
\newtheorem{ex}[thm]{Example}
\newtheorem{que}[thm]{Question}
\newtheorem{proj}[thm]{Research Project}
\newtheorem{obs}[thm]{Observation}
\newcommand{\orien}[1]{\mathcal{#1}}
\numberwithin{equation}{section}
\newcommand{\lc}{\left\lceil}
\newcommand{\rc}{\right\rceil}
\newcommand{\ceil}[1]{\lc #1 \rc}
\newcommand{\lf}{\left\lfloor}
\newcommand{\rf}{\right\rfloor}
\newcommand{\floor}[1]{\lf #1 \rf}
\newcommand{\dbdi}[2]{DBDI_{#1}(#2)}
\newcommand{\interval}[1]{DBDI_{#1}}
\title{On $(t,r)$ broadcast domination of directed graphs}
\author{Pamela E. Harris}
\author{Peter Hollander}
\author{Erik Insko}
\address[P.~E.~Harris, P.~Hollander]{Williams College,
Department of Mathematics and Statistics,
Williamstown, MA, USA}
\email[P.~E.~Harris]{\textcolor{blue}{\href{mailto:peh2@williams.edu}{peh2@williams.edu}}}
\email[P.~Hollander]{\textcolor{blue}{\href{mailto:pjh1@williams.edu}{pjh1@williams.edu}}}
\address[E.~Insko]{Florida Gulf Coast University,
Department of Mathematics,
 Fort Myers, FL, USA}
\email[E.~Insko]{\textcolor{blue}{\href{mailto:einsko@fgcu.edu}{einsko@fgcu.edu}}}
\begin{document}

\begin{abstract}
    A \emph{dominating set} of a graph $G$ is a set of vertices that contains at least one endpoint of every edge on the graph. The \emph{domination number} of $G$ is the order of a minimum dominating set of $G$. The \emph{$(t,r)$ broadcast domination} is a generalization of domination in which a set of broadcasting vertices emits signals of strength $t$ that decrease by 1 as they traverse each edge, and we require that every vertex in the graph receives a cumulative signal of at least $r$ from its set of broadcasting neighbors. In this paper, we extend the study of $(t,r)$ broadcast domination to directed graphs. Our main result explores the interval of values obtained by considering the directed $(t,r)$ broadcast domination numbers of all orientations of a graph $G$. In particular, we prove that in the cases $r=1$ and $(t,r) = (2,2)$, for every integer value in this interval, there exists an orientation $\vec{G}$ of $G$ which has directed $(t,r)$ broadcast domination number equal to that value. We also investigate directed $(t,r)$ broadcast domination on the finite grid graph, the star graph, the infinite grid graph, and the infinite triangular lattice graph. We conclude with some directions for future study.
\end{abstract}

\keywords{Directed domination, directed broadcasts, and finite and infinite directed grid graphs.}
\subjclass[2010]{05C69, 05C12, 05C30, 68R05, 68R10}

\maketitle

\section{Introduction}\label{section}
Throughout this work we let $G$ denote a finite simply connected graph, and let $V(G)$ and $E(G)$ denote its vertex and edge set, respectively. A \emph{dominating set} of a graph $G$ is a subset $S \subseteq V(G)$ of vertices such that every vertex of $G$ is either in $S$ or is adjacent to a vertex in $S$ by an edge in $E(G)$. The \emph{domination number} of $G$, denoted $\gamma(G)$, is the minimal cardinality of a dominating set of $G$. That is, \[\gamma(G)=\min\{|S|\,:\;\mbox{$S$ is a dominating set of $G$}\}.\]

The concept of graph domination was first introduced by Claude Berge in the 1950's and 1960's~\cite{introIdeasToGraphDom}, and the terms \emph{dominating set} and \emph{domination number} were first formally defined by Oystein Ore in 1962 \cite{firstDefOfDomNum}. Thousands of papers have since been published on the subject, and many long-standing conjectures have been made.
For a comprehensive volume on developments and results in this very active area of research, we recommend the texts by Haynes, Hedetniemi and Slater \cite{haynes2017domination,haynes1998fundamentals}. In these texts, the authors also summarize many interesting variants of graph domination and provide a plethora of open problems.

Among the many variants of the graph domination problem, one important variant is \emph{distance domination}. Distance domination allows a vertex in the dominating set to ``dominate'' not only the vertices directly adjacent to it, but it also allows for that vertex to ``dominate'' all vertices within a certain distance of it.
More precisely, a \emph{distance-$k$ dominating set} of a graph $G$ is a set $S \subseteq V(G)$ of vertices such that every vertex $v\in V(G)$ is either in $S$ or there exists a vertex $s\in S$ such that the distance between $v$ and $s$ is at most $k$. 
In other words, $S$ is a distance-$k$ dominating set of $G$ if and only if we can reach any vertex $w$ in $G$ by following a path of length at most $k$ which starts on a vertex in $S$ and terminates at $w$. Note that the length of a path is just the number of edges in that path. Similar to the standard domination number of a graph, the \emph{distance-$k$ domination number} of a graph $G$ is the minimal cardinality of a distance-$k$ dominating set of $G$. We remark that distance-$k$ domination is a generalization of standard domination, as the definition of standard domination is equivalent to that of distance-1 domination.

The concept of distance domination was first motivated in 1991 by Henning, Oellermann, and Swart \cite{distanceDomFirstPaper}, and many problems related to this graph parameter which are stated in the previously mentioned works of Haynes, Hedetniemi, and Slater \cite{haynes2017domination,haynes1998fundamentals} remain open and motivate further study.
Some recent works on distance-$k$ domination include work on finding upper bounds for the distance domination numbers of finite grid graphs by Farina and Grez \cite{FarinaGrez2016}, as well as work by Drews, Harris, and Randolph on the density of distance dominating sets for the infinite grid graph in \cite{drews2017optimal}.

For positive integer parameters $t$ and $r$, a further generalization of a graph's domination number is known as the
$(t,r)$ broadcast domination number, first defined by Blessing, Insko, Johnson, and Mauretour in 2014 \cite{blessing2014tr}. 
This is a two-parameter family of graph invariants which generalizes standard domination and distance domination. The concept of broadcast domination can first be understood with the following analogy: consider a set of broadcast towers placed on a subset of the vertices of a graph, each with a known signal strength $t$ (the easiest graph to picture is a grid, but the process works in the same way on all graphs). Each tower gives itself signal strength $t$, each neighbor of this tower receives signal strength $t-1$, each neighbor’s neighbor receives signal strength $t-2$, and so on, until the signal from a tower dies out (i.e., reaches zero). If there are multiple towers whose signal reaches a single cell phone (a vertex of $G$ that is not a tower), those signal strengths are added together. Then, given a positive integer $r$, the $(t,r)$ broadcast domination number of $G$ is the minimal number of towers of signal strength $t$ needed to ensure that every vertex of the graph receives signal strength at least $r$. 

The $(t,r)$ broadcast domination parameter, first studied by Blessing et. al.\ in~\cite{blessing2014tr}, has since been studied extensively in the graph theory literature. This parameter
has been studied on small grid graphs in \cite{r2018asymptotically}, and its density has been studied on infinite grid graphs in \cite{blessing2014tr,drews2017optimal}, on the King's lattice in \cite{Crepeau2019}, and on triangular finite and infinite graphs in \cite{harris2018broadcast}. Moreover, \cite{projectsInBroadcastDom} features many open problems for future study in this area.

In this paper, we expand on the study of domination by introducing a new domination variant which we henceforth refer to as \emph{directed $(t,r)$ broadcast domination}. Colloquially, the directed $(t,r)$ broadcast domination number is found by taking a directed graph and repeating the standard $(t,r)$ broadcast domination process, but we only allow signal to travel in the same direction as each of the graph's arcs. Note that this means that signal cannot travel ``both ways'' as it was previously able to over an edge in the undirected case. Hence, it is necessarily harder for signal to propagate from the towers toward other vertices within the graph. An application of directed $(t,r)$ broadcast domination arises by asking questions of standard $(t,r)$ broadcast domination in networks on which ``resources'' travel only in a single direction. Such settings include irrigation systems, electrical circuits, and regions of cities which are dense in one-way streets. We emphasize that the introduction of directed $(t,r)$ broadcast domination is a major contribution of this paper, which is the first to define the concept and study results arising from this new graph parameter. Moreover, we remark that all results in this paper provide a new direction for research, thereby opening many new avenues for future work in this area. 

\subsection{Overview} \label{subsec:overview}
The remainder of this paper is organized as follows. 
We begin with Section~\ref{subsec:tech_def}, which provides some important and technical definitions along with the necessary notation to make our understanding of domination theory and $(t,r)$ broadcast domination precise. In Section~\ref{chap:broadcast_domination_interval} we investigate the directed $(t,r)$ broadcast domination interval ($\interval{t,r}$), which is the smallest interval of integers containing the $(t,r)$ broadcast domination number of each orientation of a graph $G$. 
We focus first on the $\interval{t,r}$ on arbitrary graphs (Theorems \ref{thm:fullnessPart2} and \ref{thm:fullness}). 
Next, we find intervals contained in the $\interval{t,r}$ of some small grid graphs $G_{m,n}$ (Proposition \ref{prop:GridProps}). 
Additionally, we prove tight bounds on the $\interval{t,r}$ for the star graph $S_n$ (Propositions \ref{prop:firstStarProp}, \ref{prop:secondStarProp}, \ref{prop:thirdStarProp}) and the fullness of this interval (Theorem \ref{thm:starFullness}). Then, in Section~\ref{chap:infinite_grid} we introduce the density of directed $(t,r)$ broadcast domination on the infinite grid, focusing on the case $(t,r) = (2,2)$, where we give examples of orientations of the infinite grid which achieve efficient tower density $\frac{1}{3}$, $\frac{1}{2}$, and $\frac{2}{3}$ (Theorem \ref{thm:one_third_two_third}). 
The paper concludes with Section~\ref{chap:futureDirections}, which provides numerous directions for further research in directed $(t,r)$ broadcast domination.

\section{Technical Definitions and Background}\label{subsec:tech_def}

In this paper we follow the notation presented in Blessing~et.~al~\cite{blessing2014tr}. In this section we introduce all of the background needed to make our approach precise. For the interested reader we recommend Chartrand and Zhang's text~\cite{introToGraphTheory} as a good resource for further background in graph theory. We now begin with some basic graph theory concepts.

A \textbf{graph} $G$ is an ordered pair $(V(G), E(G))$ where $V(G)$ is a set of vertices and $E(G)$ is a multi-set of edges. An \textbf{edge} is a subset of $V(G)$ of size one or two.
A \textbf{directed graph (digraph)} is a graph whose edges are ordered pairs of vertices. To differentiate between directed and undirected edges, we henceforth refer to directed edges as \textbf{arcs}. 
\emph{Parallel arcs} refers to a set of multiple arcs starting and ending at the same pair of vertices, and a \emph{loop} is an arc that begins and ends at the same vertex.
A \textbf{simple digraph} is a digraph without parallel arcs or loops. 
A vertex $v$ is \textbf{incident} to an edge $e$ if $v \in e$. Two vertices $v_1, v_2$ are \textbf{adjacent} if there exists an edge $e$ such that $v_1, v_2 \in e$.
Lastly, an \textbf{oriented graph} is a simple digraph that does not contain doubly directed arcs, meaning that if $(u,v)$ is the arc from vertex $u$ to $v$, then there is no arc $(v,u)$ from $v$ to $u$. In other words, an oriented graph is a simple digraph where each edge is assigned a specified orientation.

We now introduce formally the concept of (undirected) $(t,r)$ broadcast domination. 
We follow the conventions and notation of Blessing~et.~al \cite{blessing2014tr}.

\begin{defn} Throughout we let $t \in \mathbb{N} := \{1, 2, 3, \ldots \}$. Given two vertices $u, v \in V(G)$, the \textbf{distance} between $u$ and $v$, denoted $d(u,v)$, is the minimum length of the paths connecting $u$ and $v$. We say that $v \in V(G)$ is a \textbf{broadcasting vertex of transmission strength $t$} if $u$ transmits a \emph{signal} of strength $t - d(u,v)$ to every vertex $u \in V(G)$ with $d(u,v) < t$. In the case where $d(u,v)\geq t$, then the broadcast vertex of transmission strength $t$ does not transmit any signal to the vertex $u$.
\end{defn}

\begin{defn}
Given a vertex $v$ and integer $t$, the \textbf{distance $t$ neighborhood of $v$} is defined as $N_t(v) = \{w \in V(G): d(w,v) < t\}$. If $v$ is selected to be a broadcasting vertex, then we call $N_t(v)$ the \textbf{broadcasting neighborhood of $v$}. Given a set of broadcast vertices $S \subseteq V(G)$, each with transmission strength $t$, we say that the \emph{reception} at vertex $w \in V(G)$ is 
$$r(w) = \underset{v \in S \cap N_t(w)}{\sum} (t - d(w,v)).$$
That is, the reception $r(w)$ is the sum of transmissions from neighboring broadcast vertices in $S$.
\end{defn}

\begin{defn}
A set $S \subseteq V(G)$ is called a \textbf{$(t,r)$ broadcast dominating set} if every vertex $w \in V(G)$ has a reception strength $r(w) \geq r$. For a finite graph $G$, the minimal cardinality among all broadcast dominating sets of $G$ is called the \textbf{$(t,r)$ broadcast domination number} of $G$ and is denoted $\gamma_{t,r}(G)$.
\end{defn}

\begin{ex}
The following is an example of a $(3,2)$ broadcast dominating set of an undirected 3-by-5 grid graph. To convince oneself that this set achieves the $(3,2)$ broadcast domination number requires proving that no sets of size 2 can sufficiently dominate the graph. One way of proving this claim could be to observe that all six vertices in the leftmost two columns cannot simultaneously receive sufficient reception unless at least two towers are placed somewhere within the leftmost three columns. Then, one can observe that such a placement would necessarily leave a vertex in the rightmost column with insufficient reception.
\end{ex}

\begin{figure}[h!]
\centering
\resizebox{2in}{!}{
\begin{tikzpicture}[vertex_style/.style={circle,ball color=white,draw=black!10!white},
edge_style/.style={ thick, black, drop shadow={opacity=0.4}},vertex_style_red/.style={circle,ball color=red,draw=red!40!white},
edge_style/.style={thick, black}]
\tikzstyle{ghost node}=[draw=none]
\tikzset{black node/.style={circle,draw=black, fill=white, inner sep=10.5}}
\tikzset{red node/.style={circle,draw=black, fill=red, inner sep=10.5}}
\foreach \x in {1, 2, 3, 4, 5}
    \foreach \y in {1, 2, 3} 
        \node [vertex_style,minimum size=.5in] (\x\y) at (2*\x,2*\y){0};
        \node [vertex_style,minimum size=.5in] (11) at (2*1,2*1){1 + 1};
        \node [vertex_style,minimum size=.5in] (12) at (2*1,2*2){2};
        \node [vertex_style_red,minimum size=.5in] (13) at (2*1,2*3){3};
        \node [vertex_style,minimum size=.5in] (21) at (2*2,2*1){2};
        \node [vertex_style,minimum size=.5in] (22) at (2*2,2*2){1 + 1};
        \node [vertex_style,minimum size=.5in] (23) at (2*2,2*3){2};
        \node [vertex_style_red,minimum size=.5in] (31) at (2*3,2*1){3};
        \node [vertex_style,minimum size=.5in] (32) at (2*3,2*2){2};
        \node [vertex_style,minimum size=.5in] (33) at (2*3,2*3){1+1+1};
        \node [vertex_style,minimum size=.5in] (41) at (2*4,2*1){2};
        \node [vertex_style,minimum size=.5in] (42) at (2*4,2*2){1+1};
        \node [vertex_style,minimum size=.5in] (43) at (2*4,2*3){2};
        \node [vertex_style,minimum size=.5in] (51) at (2*5,2*1){1+1};
        \node [vertex_style,minimum size=.5in] (52) at (2*5,2*2){2};
        \node [vertex_style_red,minimum size=.5in] (53) at (2*5,2*3){3};
\foreach \x in {1,2,3,4,5}
    \foreach \y  [count=\yi from 2] in {1,2} 
        \path[] (\x\y)edge(\x\yi);
\foreach \y in {1,2,3}
    \foreach \x  [count=\xi from 2] in {1,2,3,4}
        \path[] (\x\y)edge(\xi\y);
\end{tikzpicture}}
\caption{An example of a minimal $(3,2)$ dominating set of the 3-by-5 grid graph is shown. Towers are highlighted in red, and values/sums within vertices denote the reception at those vertices.}
    \label{fig:3x5solutionExample}
\end{figure}
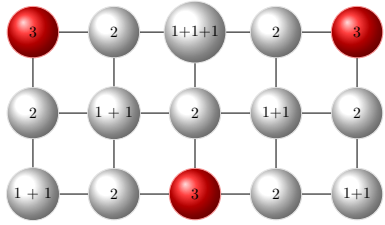

\begin{ex}
For a given graph, there may be many possible broadcast dominating sets of minimal cardinality. Figure~\ref{fig:3x5solutionExample(3,1)a} illustrates two examples of $(3,1)$ broadcast dominating sets with cardinality equal to $2$, which is the $(3,1)$ broadcast domination number of the 3-by-5 grid graph.
\end{ex}

\begin{figure}[h!]
\centering
\resizebox{2in}{!}{
\begin{tikzpicture}[vertex_style/.style={circle,ball color=white,draw=black!10!white},
edge_style/.style={ thick, black, drop shadow={opacity=0.4}},vertex_style_red/.style={circle,ball color=red,draw=red!40!white},
edge_style/.style={thick, black,drop shadow={opacity=0.4}}]
\tikzstyle{ghost node}=[draw=none]
\tikzset{black node/.style={circle,draw=black, fill=white, inner sep=10.5}}
\tikzset{red node/.style={circle,draw=black, fill=red, inner sep=10.5}}

\foreach \x in {1, 2, 3, 4, 5}
    \foreach \y in {1, 2, 3} 
        \node [vertex_style,minimum size=.5in] (\x\y) at (2*\x,2*\y){0};
        \node [vertex_style,minimum size=.5in] (11) at (2*1,2*1){1};
        \node [vertex_style,minimum size=.5in] (12) at (2*1,2*2){2};
        \node [vertex_style,minimum size=.5in] (13) at (2*1,2*3){1};
        \node [vertex_style,minimum size=.5in] (21) at (2*2,2*1){2};
        \node [vertex_style_red,minimum size=.5in] (22) at (2*2,2*2){3};
        \node [vertex_style,minimum size=.5in] (23) at (2*2,2*3){2};
        \node [vertex_style,minimum size=.5in] (31) at (2*3,2*1){1 + 1};
        \node [vertex_style,minimum size=.5in] (32) at (2*3,2*2){2 + 2};
        \node [vertex_style,minimum size=.5in] (33) at (2*3,2*3){1 + 1};
        \node [vertex_style,minimum size=.5in] (41) at (2*4,2*1){2};
        \node [vertex_style_red,minimum size=.5in] (42) at (2*4,2*2){3};
        \node [vertex_style,minimum size=.5in] (43) at (2*4,2*3){2};
        \node [vertex_style,minimum size=.5in] (51) at (2*5,2*1){1};
        \node [vertex_style,minimum size=.5in] (52) at (2*5,2*2){2};
        \node [vertex_style,minimum size=.5in] (53) at (2*5,2*3){1};
\foreach \x in {1,2,3,4,5}
    \foreach \y  [count=\yi from 2] in {1,2} 
        \path[] (\x\y)edge(\x\yi);
\foreach \y in {1,2,3}
    \foreach \x  [count=\xi from 2] in {1,2,3,4}
        \path[] (\x\y)edge(\xi\y);
\end{tikzpicture}}
\hspace{1cm}
\resizebox{2in}{!}{
\begin{tikzpicture}[vertex_style/.style={circle,ball color=white,draw=black!10!white},
edge_style/.style={ thick, black, drop shadow={opacity=0.4}},vertex_style_red/.style={circle,ball color=red,draw=red!40!white},
edge_style/.style={thick, black,drop shadow={opacity=0.4}}]
\tikzstyle{ghost node}=[draw=none]
\tikzset{black node/.style={circle,draw=black, fill=white, inner sep=10.5}}
\tikzset{red node/.style={circle,draw=black, fill=red, inner sep=10.5}}

\foreach \x in {1, 2, 3, 4, 5}
    \foreach \y in {1, 2, 3} 
        \node [vertex_style,minimum size=.5in] (\x\y) at (2*\x,2*\y){0};
        \node [vertex_style,minimum size=.5in] (11) at (2*1,2*1){1};
        \node [vertex_style,minimum size=.5in] (12) at (2*1,2*2){2};
        \node [vertex_style,minimum size=.5in] (13) at (2*1,2*3){1};
        \node [vertex_style,minimum size=.5in] (21) at (2*2,2*1){2};
        \node [vertex_style_red,minimum size=.5in] (22) at (2*2,2*2){3};
        \node [vertex_style,minimum size=.5in] (23) at (2*2,2*3){2};
        \node [vertex_style,minimum size=.5in] (31) at (2*3,2*1){1};
        \node [vertex_style,minimum size=.5in] (32) at (2*3,2*2){2 + 1};
        \node [vertex_style,minimum size=.5in] (33) at (2*3,2*3){1};
        \node [vertex_style,minimum size=.5in] (41) at (2*4,2*1){1};
        \node [vertex_style,minimum size=.5in] (42) at (2*4,2*2){2};
        \node [vertex_style,minimum size=.5in] (43) at (2*4,2*3){1};
        \node [vertex_style,minimum size=.5in] (51) at (2*5,2*1){2};
        \node [vertex_style_red,minimum size=.5in] (52) at (2*5,2*2){3};
        \node [vertex_style,minimum size=.5in] (53) at (2*5,2*3){2};
\foreach \x in {1,2,3,4,5}
    \foreach \y  [count=\yi from 2] in {1,2} 
        \path[] (\x\y)edge(\x\yi);
\foreach \y in {1,2,3}
    \foreach \x  [count=\xi from 2] in {1,2,3,4}
        \path[] (\x\y)edge(\xi\y);
\end{tikzpicture}}
\caption{Two examples of minimal $(3,1)$ dominating sets of the 3-by-5 grid graph.} 
\label{fig:3x5solutionExample(3,1)a}
\end{figure}
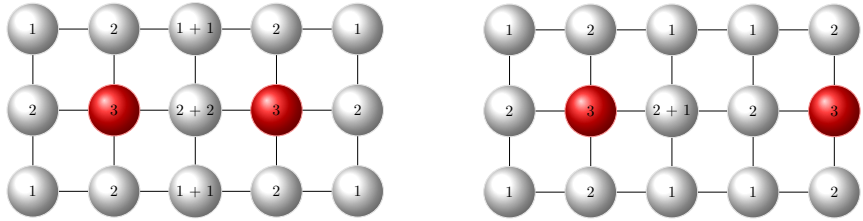

It is important to note that the dominating sets presented in Figure~\ref{fig:3x5solutionExample} form only a subset of all of the possible $(3,1)$ dominating sets of the 3-by-5 grid graph. 

\subsection{Directed \texorpdfstring{$(t,r)$}{(t,r)} Broadcast Domination}\label{subsec:directed_t_r}

Now, we introduce \emph{directed $(t,r)$ broadcast domination}, the main object of study in this paper. The following are previous definitions which we extend to an orientation $\vec{G}$ of $G$. That is, we orient all of the edges of $G$, letting $\vec{G}$ be the resulting digraph, and define $(t,r)$ broadcast domination on~$\vec{G}$. 

\begin{defn}
Let $t \in \mathbb{N} := \{1, 2, 3, \ldots \}$. Given two vertices $u, v \in V(\vec{G})$ the distance from $u$ to $v$, denoted $d(u,v)$, is the minimum length of the directed paths from $u$ to $v$. We say that $u \in V(\vec{G})$ is a \textbf{broadcasting vertex of directed transmission strength $t$} if it transmits a \textbf{signal} of strength $t - d(u,v)$ to every vertex $v \in V(G)$ with $d(u,v) < t$. Once again, vertices which are distance at least $t$ from $u$ receive signal of strength 0 from $u$.
\end{defn}

Note that in directed graphs it is not necessarily the case that $d(u,v) = d(v,u)$. Also note that if there is no directed path from $u$ to $v$, then $d(u,v) = \infty$.
Furthermore, we adopt the following two conventions. Firstly, the transmission strength is omitted when it is clear from context. Secondly, this paper uses the terms ``broadcasting vertex" and ``tower" interchangeably. Now we continue with other important definitions.

\begin{defn}
Given a vertex $v \in V(\vec{G})$ and positive integer $t$, the \textbf{distance $t$ out-neighborhood of $v$} is defined as $N_t^+(v) = \{w \in V(G): d(v,w) < t\}$. 
Likewise, the \textbf{distance $t$ in-neighborhood of $v$} is defined as $N_t^-(v) = \{w \in V(G): d(w,v) < t\}$. 
If $v$ is selected to be a broadcasting vertex, then we call $N_t^+(v)$ the \textbf{broadcasting out-neighborhood of $v$}. 
Given a set of broadcast vertices $S \subseteq V(\vec{G})$, each with transmission strength $t$, we say that the \textbf{directed reception} at vertex $w \in V(\vec{G})$ is 
$$\vec{r}(w) = \underset{v \in S \cap N_t^-(w)}{\sum} (t - d(v, w)).$$
That is, the directed reception $\vec{r}(w)$ is the sum of the out-transmissions from its in-neighboring broadcast vertices in $S$.
\end{defn}

\begin{defn}
A set $S \subseteq V(\vec{G})$ is called a \textbf{directed $(t,r)$ broadcast dominating set} if every vertex $w \in V(\vec{G})$ has a directed reception strength $\vec{r}(w) \geq r$. For a finite digraph $\vec{G}$, the minimal cardinality among all broadcast dominating sets of $\vec{G}$ is called the \textbf{directed $(t,r)$ broadcast domination number of} $\vec{G}$ and is denoted $\gamma_{t,r}(\vec{G})$.
\end{defn}

For any simple graph $G$ there are $2^{|E(G)|}$ orientations of the edges of $G$. Let $\orien{G}$ be the set of all orientations of the edges of the graph $G$. Then we are interested in determining possible values of $\gamma_{t,r}(\vec{G})$ whenever $\vec{G}\in\orien{G}$. This motivates the following.

\begin{defn}
Let $A_{t,r}(G) := \{\gamma_{t,r}(\vec{G}) \mid \vec{G}\in\orien{G} \}$. The \textbf{directed $(t,r)$ broadcast domination interval} of a graph $G$, denoted $\dbdi{t,r}{G}$, is the interval $[d, D]$, where $d = \min(A_{t,r}(G))$ and $D = \max(A_{t,r}(G))$. 
\end{defn}
Colloquially, the directed $(t,r)$ broadcast domination interval of $G$ is smallest contiguous interval of integers which contains all possible values of $\gamma_{t,r}(\vec{G})$ for $\vec{G}\in\orien{G}$. Such a concept is motivated by the case in which signal (or some other resource) may only travel in a single direction, but the user maintains control over that direction. For example, a factory manager may wish to establish a traffic pattern for a factory line in her facility, and she may want to know the discrepancies between different directions of travel throughout the plant. She may also wish to know how drastic such trade-offs could be, for which knowing the width of the $\interval{t,r}$ would be useful. A main result of this paper establishes that for any graph $G$ and
$(t,r)\in\{(2,2)\}\cup\{(t,1):t\geq 1\}$, 
given any any integer in $k\in\dbdi{t,r}{G}$, 
there exists $\vec{G}\in\orien{G}$, 
such that $\gamma_{t,r}(\vec{G})=k$. This is the content of Theorem \ref{thm:fullness}, which we prove in Section~\ref{chap:broadcast_domination_interval}.
We remark that the introduction of these novel definitions serves as the first contribution of this paper to the study of $(t,r)$ broadcast domination.

\section{The Directed \texorpdfstring{$(t,r)$}{(t,r)} Broadcast Domination Interval}
\label{chap:broadcast_domination_interval}

This section concerns the directed $(t,r)$ broadcast domination interval, first discussing arbitrary graphs before focusing on stars and small grid graphs.

\subsection{Results on Arbitrary Graphs}\label{subsec:arbitrary_graphs}

One interesting question surrounding the directed $(t,r)$ broadcast domination interval is whether the name \emph{interval} is actually appropriate. Namely, given a graph $G$ and fixed values $t$ and $r$, does $\dbdi{t,r}{G} = [d,D]$ contain all integer values within this interval. In other words, for all $b \in [d,D] \cap \Z$, does there exist an orientation $\vec{G}$ such that $\gamma_{t,r}(\vec{G}) = b$? In this section, we show that in the case $r = 1$ the answer is yes, and we conjecture this to be true for all $r$-values.
Our first result shows that for a fixed $t$ and for $r = 1$, flipping an arc in any orientation of a graph cannot change the directed $(t,r)$ broadcast domination number of the graph by more than 1.

\begin{lem} \label{lem: arcFlips}
Let $t$ be any positive integer. Given a graph $G$, let $\vec{G}_0$ and $\vec{G}_1$ be two orientations of $G$ where $\vec{G}_1$ can be obtained from $\vec{G}_0$ by flipping (reversing the direction of) a single arc. Then $|\gamma_{t,1}(\vec{G}_0) - \gamma_{t,1}(\vec{G}_1)| \leq 1$. \end{lem}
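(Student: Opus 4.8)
The plan is to first strip the $r=1$ hypothesis down to a pure reachability condition, and then to show that a single arc flip can be repaired by a single extra tower. First I would observe that when $r=1$, each tower $v \in S \cap N_t^-(w)$ contributes $t - d(v,w) \ge 1$ to $\vec{r}(w)$, so $\vec{r}(w) \ge 1$ holds if and only if $S \cap N_t^-(w) \ne \emptyset$. Hence $S$ is a directed $(t,1)$ broadcast dominating set of $\vec{G}$ exactly when every vertex $w$ satisfies $d(v,w) \le t-1$ for some $v \in S$, and $\gamma_{t,1}(\vec{G})$ is simply the least number of vertices needed so that every vertex lies within directed distance $t-1$ of one of them. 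This reformulation is what makes the $r=1$ case tractable.

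Next I would use that flipping an arc is an involution: flipping the same arc in $\vec{G}_1$ recovers $\vec{G}_0$. Therefore it suffices to prove the one-sided bound $\gamma_{t,1}(\vec{G}_1) \le \gamma_{t,1}(\vec{G}_0) + 1$, since the reverse inequality follows by the symmetric argument, and together they give $|\gamma_{t,1}(\vec{G}_0) - \gamma_{t,1}(\vec{G}_1)| \le 1$. Writing the flipped arc as $(a,b)$ in $\vec{G}_0$ (so it is $(b,a)$ in $\vec{G}_1$) and letting $S_0$ be a minimum directed $(t,1)$ dominating set of $\vec{G}_0$, the construction I would propose is $S_1 := S_0 \cup \{b\}$; if this dominates $\vec{G}_1$, then $\gamma_{t,1}(\vec{G}_1) \le |S_0| + 1$ is immediate.

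To verify that $S_1$ dominates $\vec{G}_1$, I would fix any vertex $w$ together with a shortest directed path $P$ in $\vec{G}_0$ from some tower $v \in S_0$ to $w$, of length $d(v,w) \le t-1$. If $P$ avoids the arc $(a,b)$, then all of its arcs persist in $\vec{G}_1$, so $w$ stays dominated by $v \in S_1$. If $P$ traverses $(a,b)$, then the sub-path of $P$ from $b$ to $w$ has length at most $t-2$, because the prefix from $v$ to $b$ ends with the arc $(a,b)$ and so uses at least one edge; moreover, since $P$ is a shortest path it is simple, so this tail leaving $b$ never re-enters $b$ and hence never uses the arc $(a,b)$ (whose head is $b$). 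Thus the tail survives the flip, giving $d_{\vec{G}_1}(b,w) \le t-2 \le t-1$, so $w$ is dominated by the new tower $b \in S_1$. Either way $w$ is dominated, which is the claim.

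The hard part, and the step that needs the most care, is controlling how a single arc flip perturbs directed distances, since a priori it could sever the short tower-paths to many vertices simultaneously. The key insight making one extra tower enough is that every path destroyed by the flip funnels through the head $b$ within distance $t-2$, and that the portion of a shortest path from $b$ onward is untouched by the flip; placing the new tower precisely at $b$ therefore repairs all orphaned vertices at once. I would also want to check the degenerate boundary case $t=1$, where $N_t^{\pm}(v) = \{v\}$ forces $S = V(G)$ and $\gamma_{1,1}$ is constant across orientations, so the bound holds trivially and is consistent with the argument (no path of length $\le t-1 = 0$ can traverse the flipped arc).
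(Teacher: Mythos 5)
Your proposal is correct and follows essentially the same route as the paper: both reduce to the one-sided bound via the involution symmetry of arc flips, and both repair the flip by adding the head of the original arc (your $b$, the paper's $v$) as a new tower. Your path-based verification that every orphaned vertex lies within distance $t-2$ of $b$ is a more careful spelling-out of the paper's signal-strength argument, but it is the same underlying idea.
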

\begin{proof}
Without loss of generality, suppose $\vec{G}_0$ is the starting orientation, and let $a = (u,v)$ be the arc which is flipped to obtain $\vec{G}_1$ so that $a$ becomes $a' = (v,u)$. 
If $S \subseteq V(G)$ is a $(t,1)$ directed broadcast dominating set of $\vec{G}_0$ with $|S| = \gamma_{t,1}(\vec{G}_0)$, then $S \cup \{v\}$ is a $(t,1)$ directed broadcast dominating set of $\vec{G}_1$ as we have now ensured that $v$ has reception $t\geq 1$ since it is now a tower. Moreover, the size of a directed $(t,1)$ broadcast dominating set has increased by at most 1. 
Again, note that because $v$ is now a broadcasting vertex, the transmission strength at $v$ has strictly increased if $v$ was not previously a broadcasting vertex, or it has stayed the same if $v$ was already a broadcasting vertex. Thus, the directed reception at all other vertices is at least what it was using the dominating set $S$. This implies that  $\gamma_{t,1}(\vec{G}_1) \leq \gamma_{t,1}(\vec{G}_0) + 1$. A visual representation of this argument is shown in Figure~\ref{fig:arcFlipsProof}. To obtain the reverse inequality, suppose the directed $(t,r)$ broadcast domination number could decrease by more than 1 after flipping a single arc. 
This would imply that flipping that same arc once again would cause the directed $(t,r)$ broadcast domination number to increase by more than 1, a contradiction, since we assumed $|S|$ was the directed $(t,1)$ broadcast domination number of the initial orientation $\vec{G}_0$.
\end{proof}

\begin{figure}[h!]
\centering
\resizebox{1.25in}{!}{
\begin{tikzpicture}
[vertex_style/.style={circle,ball color=white,draw=black!10!white},edge_style/.style={ thick, black,drop shadow={opacity=0.4}},vertex_style_red/.style={circle,ball color=red,draw=red!40!white},vertex_style_blue/.style={circle,ball color=blue,draw=blue!40!white,drop shadow={opacity=0.2}},edge_style/.style={thick, black,drop shadow={opacity=0.4}},rotate=18]
\node[vertex_style,minimum size=.5in](a0) at (0,0) {$t-1$};
\node[vertex_style_red,minimum size=.5in](a1) at (3,0) {$t$};
\node[vertex_style,minimum size=.5in](a2) at (2.1,2.1) {$t-2$};
\node[vertex_style,minimum size=.5in](a3) at (0,3) {$t-2$};
\node[vertex_style,minimum size=.5in](a4) at (-2.1,2.1) {$t-2$};
\node[vertex_style,minimum size=.5in](a5) at (-3,0) {$t-2$};
\node[vertex_style,minimum size=.5in](a6) at (-2.1,-2.1) {$t-2$};
\node[vertex_style,minimum size=.5in](a7) at (0,-3) {$t-2$};
\node[vertex_style,minimum size=.5in](a8) at (2.1,-2.1) {$t-2$};
\draw[<-, ultra thick] (a0)--(a1);
\draw[->, ultra thick] (a0)--(a2);
\draw[->, ultra thick] (a0)--(a3);
\draw[->, ultra thick] (a0)--(a4);
\draw[->, ultra thick] (a0)--(a5);
\draw[->, ultra thick] (a0)--(a6);
\draw[->, ultra thick] (a0)--(a7);
\draw[->, ultra thick] (a0)--(a8);
\end{tikzpicture}}
\hspace{1cm}
\resizebox{1.25in}{!}{
\begin{tikzpicture}
[vertex_style/.style={circle,ball color=white,draw=black!10!white},edge_style/.style={ thick, black,drop shadow={opacity=0.4}},vertex_style_red/.style={circle,ball color=red,draw=red!40!white},vertex_style_blue/.style={circle,ball color=blue,draw=blue!40!white,drop shadow={opacity=0.2}},edge_style/.style={thick, black,drop shadow={opacity=0.4}},rotate=18]
\node[vertex_style_red,minimum size=.5in](a0) at (0,0) {$t$};
\node[vertex_style_red,minimum size=.5in](a1) at (3,0) {$2t-1$};
\node[vertex_style,minimum size=.5in](a2) at (2.1,2.1) {$t-1$};
\node[vertex_style,minimum size=.5in](a3) at (0,3) {$t-1$};
\node[vertex_style,minimum size=.5in](a4) at (-2.1,2.1) {$t-1$};
\node[vertex_style,minimum size=.5in](a5) at (-3,0) {$t-1$};
\node[vertex_style,minimum size=.5in](a6) at (-2.1,-2.1) {$t-1$};
\node[vertex_style,minimum size=.5in](a7) at (0,-3) {$t-1$};
\node[vertex_style,minimum size=.5in](a8) at (2.1,-2.1) {$t-1$};
\draw[->, ultra thick, color = green] (a0)--(a1);
\draw[->, ultra thick] (a0)--(a2);
\draw[->, ultra thick] (a0)--(a3);
\draw[->, ultra thick] (a0)--(a4);
\draw[->, ultra thick] (a0)--(a5);
\draw[->, ultra thick] (a0)--(a6);
\draw[->, ultra thick] (a0)--(a7);
\draw[->, ultra thick] (a0)--(a8);
\end{tikzpicture}}
\caption{The graphs $\vec{G_0}$ and $\vec{G_1}$ are shown on the left and right, respectively, with the flipped arc highlighted in green. Because the tail of the flipped arc becomes a broadcasting vertex, its transmission strength strictly increases, even if it receives signal from other towers.}
\label{fig:arcFlipsProof}
\end{figure}
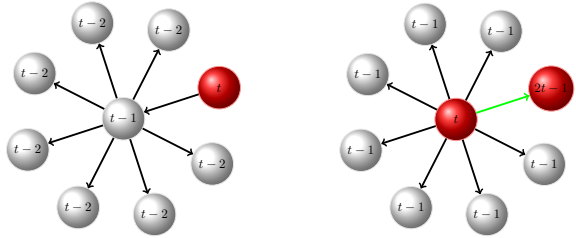

\begin{thm}
\label{thm:fullness}
Given a graph $G$, let $\dbdi{t,1}{G} = [d, D]$. Then, for every $b \in \Z \cap [d, D]$, there exists an orientation $\vec{G}_b$ with $\gamma_{t,1}(\vec{G}_b) = b$.
\end{thm}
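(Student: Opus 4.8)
The plan is to treat Lemma~\ref{lem: arcFlips} as a discrete Lipschitz bound and combine it with a discrete intermediate value argument. First I would note that there are only $2^{|E(G)|}$ orientations of $G$, so $A_{t,1}(G)$ is a finite set and its minimum $d$ and maximum $D$ are genuinely attained. Fix orientations $\vec{G}_d, \vec{G}_D \in \orien{G}$ with $\gamma_{t,1}(\vec{G}_d) = d$ and $\gamma_{t,1}(\vec{G}_D) = D$.

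Next I would connect these two orientations by a sequence of single arc flips. Since $\vec{G}_d$ and $\vec{G}_D$ are orientations of the same underlying graph $G$, they agree on some edges and disagree on others. Flipping the disagreeing arcs one at a time produces a chain of orientations
\[
\vec{G}_d = \vec{H}_0, \ \vec{H}_1, \ \ldots, \ \vec{H}_k = \vec{G}_D,
\]
in which each $\vec{H}_{i+1}$ is obtained from $\vec{H}_i$ by reversing exactly one arc, and $k$ equals the number of edges on which the two orientations disagree.

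Then, applying Lemma~\ref{lem: arcFlips} at each step, the consecutive terms of the integer sequence $\gamma_{t,1}(\vec{H}_0), \ldots, \gamma_{t,1}(\vec{H}_k)$ differ by at most $1$. This sequence starts at $d$ and ends at $D$. Finally I would invoke the discrete intermediate value theorem: a finite sequence of integers whose consecutive terms differ by at most $1$ must attain every integer lying between its first and last values. Consequently, for each integer $b \in [d, D]$ there is an index $i$ with $\gamma_{t,1}(\vec{H}_i) = b$, and setting $\vec{G}_b := \vec{H}_i$ yields the desired orientation.

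I do not anticipate a serious obstacle, since the heavy lifting is already done by Lemma~\ref{lem: arcFlips}. The only point deserving a word of justification is the discrete intermediate value step, which follows by a short induction on the index: as the sequence moves from one term to the next it changes by at most one unit, so it cannot leap over any intervening integer, and therefore every value between the endpoints is realized along the way.
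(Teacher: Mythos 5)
Your proposal is correct and follows essentially the same route as the paper: connect the orientations achieving $d$ and $D$ by a chain of single arc flips, apply Lemma~\ref{lem: arcFlips} at each step, and conclude by a discrete intermediate value argument. Your write-up simply makes explicit two points the paper leaves implicit (that the extremal orientations exist because $\orien{G}$ is finite, and the induction behind the discrete intermediate value step), which is a welcome but not substantive difference.
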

\begin{proof}
Let $\dbdi{t,1}{G} = [d, D]$. Let $\vec{G}_d, \vec{G}_D$ be the orientations of $G$ which achieve directed $(t,1)$ broadcast domination numbers $d$ and $D$, respectively. Notice that $\vec{G}_D$ can be obtained from $\vec{G}_d$ by a series of arc flips. 
As we perform these arc flips, the directed $(t,1)$ broadcast domination number of the resulting graph can never change by more than one, as we established in Lemma \ref{lem: arcFlips}. This implies that at some point each integer value in $\Z \cap [d, D]$ was attained.
\end{proof}

Interestingly, an identical argument, as that presented in Theorem \ref{thm:fullness}, also holds in the case $(t,r) = (2,2)$. This is because signal from any broadcasting vertex reaches only that broadcasting vertex's immediate out-neighborhood. As a result, we have the following.

\begin{thm}
Given a graph $G$, let $\dbdi{2,2}{G} = [d, D]$. Then, for every $b \in \Z \cap [d, D]$, there exists an orientation $\vec{G}_b$ with $\gamma_{2,2}(\vec{G}_b) = b$.
\end{thm}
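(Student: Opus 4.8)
The plan is to mirror the two-step structure of Theorem~\ref{thm:fullness}: first establish the $(2,2)$-analog of the single-arc-flip Lemma~\ref{lem: arcFlips}, namely that $|\gamma_{2,2}(\vec{G}_0) - \gamma_{2,2}(\vec{G}_1)| \leq 1$ whenever $\vec{G}_1$ is obtained from $\vec{G}_0$ by reversing one arc, and then run the same discrete intermediate-value argument along a sequence of arc flips connecting an orientation realizing $d$ to one realizing $D$.

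For the arc-flip lemma, I would write $a=(u,v)$ for the flipped arc, so that $a'=(v,u)$ in $\vec{G}_1$, and let $S$ be a minimum $(2,2)$-dominating set of $\vec{G}_0$. I would then show that $S' := S \cup \{v\}$ dominates $\vec{G}_1$. The decisive feature of $t=2$ is locality: since a tower transmits only to its distance-$1$ out-neighborhood, the directed reception $\vec{r}(w)$ depends only on which towers lie in $N_2^-(w)$, that is, on $w$ together with its in-neighbors. Reversing $a$ alters the in-neighbor relation only at the two endpoints $u$ and $v$, so for every vertex $w \notin \{u,v\}$ the set $N_2^-(w)$ is unchanged; since $S \subseteq S'$, the reception at such $w$ can only weakly increase and hence stays $\geq 2$. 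At $v$ we have $v \in S'$, so $v$ receives signal $t=2$ from itself and is dominated (this is exactly the place where $t=r=2$ is used). At $u$, the only change is that $u$ gains $v$ as a new in-neighbor; since $v\in S'$ this can only add to the reception at $u$, which was already $\geq 2$ in $\vec{G}_0$. Thus $S'$ dominates $\vec{G}_1$ and $\gamma_{2,2}(\vec{G}_1) \leq \gamma_{2,2}(\vec{G}_0)+1$. The reverse inequality follows by the same symmetry argument as in Lemma~\ref{lem: arcFlips}: were the domination number to drop by more than one under the flip, flipping back would raise it by more than one, contradicting the minimality of $S$.

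With this lemma in hand, the theorem is immediate. Let $\vec{G}_d$ and $\vec{G}_D$ realize the endpoints of $\dbdi{2,2}{G}=[d,D]$. Since $\vec{G}_D$ is reachable from $\vec{G}_d$ by reversing one arc at a time, and each reversal changes $\gamma_{2,2}$ by at most $1$, the resulting sequence of domination numbers starts at $d$, ends at $D$, and moves in steps of size at most one; therefore it attains every integer value in $[d,D]$ at some intermediate orientation $\vec{G}_b$.

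The step to verify most carefully is the locality claim underpinning the arc-flip lemma, specifically that reversing $a$ leaves $N_2^-(w)$ genuinely unchanged for all $w \notin\{u,v\}$ and that the only new contribution at $u$ comes from the now-tower $v$. This is precisely what breaks down for larger $t$, where reversing a single arc can perturb distances, and hence receptions, along longer directed paths; the restriction to $t=2$ is exactly what confines the effect of a flip to the two endpoints of the reversed arc.
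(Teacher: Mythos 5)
Your proposal is correct and follows essentially the same route as the paper: the paper's proof of this theorem likewise observes that for $t=2$ a flip of $(u,v)$ can cost reception only at the single vertex $v$, adds that vertex to the dominating set to get the one-step bound $|\gamma_{2,2}(\vec{G}_0)-\gamma_{2,2}(\vec{G}_1)|\leq 1$, and then runs the same discrete intermediate-value argument as in Theorem~\ref{thm:fullness}. Your write-up is in fact more explicit than the paper's about the locality of $N_2^-(w)$ and the behavior at $u$, but the underlying argument is identical.
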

\label{thm:fullnessPart2}
\begin{proof}
The argument is identical to that of Lemma~\ref{lem: arcFlips} and Theorem~\ref{thm:fullness}. 
Consider graphs $\vec{G_0}$ and $\vec{G_1}$ as before.
Because signal form any broadcasting vertex reaches only the out-neighborhood of that vertex, flipping such an arc causes at most one vertex to lose sufficient reception. 
To mitigate this, include that vertex in the set of broadcasting vertices for $G_1$, and the resulting set forms a directed $(2,2)$ broadcast dominating set. 
As a result, $\gamma_{2,2}(\vec{G}_1) - \gamma_{2,2}(\vec{G}_0) \leq 1$, and by symmetry we have that  $|\gamma_{2,2}(\vec{G}_0) - \gamma_{2,2}(\vec{G}_1) | \leq 1$. This immediately proves the desired result.
\end{proof}

It is important to illustrate, however, that the same argument does not hold in general for arbitrary $t$ and $r$. Figure~\ref{fig:flippingArgDoesn'tScale} shows that on a path on seven vertices with $(t,r) = (10,8)$, attempting to make the vertex which becomes the new tail of the flipped arc a broadcasting vertex does not create a directed $(t,r)$ broadcast dominating set.

\begin{figure}[h!]
\centering
\resizebox{2.5in}{!}{
\begin{tikzpicture}[vertex_style/.style={circle,ball color=white,draw=black!10!white},
edge_style/.style={ thick, black, drop shadow={opacity=0.4}},vertex_style_red/.style={circle,ball color=red,draw=red!40!white},
edge_style/.style={thick, black,drop shadow={opacity=0.4}}]
\tikzstyle{ghost node}=[draw=none]
\tikzset{black node/.style={circle,draw=black, fill=white, inner sep=10.5}}
\tikzset{red node/.style={circle,draw=black, fill=red, inner sep=10.5}}

\foreach \x in {1, 2, 3, 4, 5, 6, 7}
    \foreach \y in {1, 2} 
        \node [vertex_style,minimum size=.5in] (\x\y) at (2*\x,2*\y){0};
        \node [vertex_style,minimum size=.5in] (11) at (2*1,2*1){6};
        \node [vertex_style,minimum size=.5in] (12) at (2*1,2*2){5+4};
        \node [vertex_style,minimum size=.5in] (21) at (2*2,2*1){7};
        \node [vertex_style,minimum size=.5in] (22) at (2*2,2*2){6+5};
        \node [vertex_style,minimum size=.5in] (31) at (2*3,2*1){8};
        \node [vertex_style,minimum size=.5in] (32) at (2*3,2*2){7+6};
        \node [vertex_style,minimum size=.5in] (41) at (2*4,2*1){9};
        \node [vertex_style,minimum size=.5in] (42) at (2*4,2*2){8+7};
        \node [vertex_style_red,minimum size=.5in] (51) at (2*5,2*1){10};
        \node [vertex_style,minimum size=.5in] (52) at (2*5,2*2){9+8};
        \node [vertex_style_red,minimum size=.5in] (61) at (2*6,2*1){9+9};
        \node [vertex_style_red,minimum size=.5in] (62) at (2*6,2*2){10+9};
        \node [vertex_style_red,minimum size=.5in] (71) at (2*7,2*1){10};
        \node [vertex_style_red,minimum size=.5in] (72) at (2*7,2*2){10};
\foreach \y in {1,2}
    \foreach \x  [count=\xi from 2] in {1,2,3,4,5,6}
        \path[] (\x\y)edge(\xi\y);
\draw[<-, ultra thick] (11)--(21);
\draw[<-, ultra thick] (21)--(31);
\draw[<-, ultra thick] (31)--(41);
\draw[<-, ultra thick] (41)--(51);
\draw[->, ultra thick, color = green] (51)--(61);
\draw[<-, ultra thick] (61)--(71);
\draw[<-, ultra thick] (12)--(22);
\draw[<-, ultra thick] (22)--(32);
\draw[<-, ultra thick] (32)--(42);
\draw[<-, ultra thick] (42)--(52);
\draw[<-, ultra thick] (52)--(62);
\draw[<-, ultra thick] (62)--(72);
\end{tikzpicture}}
\caption{For $(t,r) = (10,8)$, the strategy used in Lemma~\ref{lem: arcFlips} does not hold. In the following example, the resulting set of vertices of the bottom graph does not form a directed $(10,8)$ dominating set. Broadcasting vertices are highlighted in red, and the arc which was flipped is highlighted in green.}
\label{fig:flippingArgDoesn'tScale}
\end{figure}
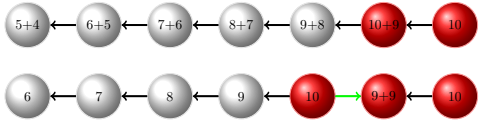

Let $\vec{G}^T$ denote the \textbf{transpose} orientation of $\vec{G}$, which orients every arc in $\vec{G}$ in the opposite direction in $\vec{G}^T$. The following is another known result from 2013 about digraph parameters. While it appears to be useful in proving the fullness of the directed $(t,r)$ broadcast domination interval for arbitrary $t$ and $r$, the first condition is often not true for the directed $(t,r)$ broadcast domination number of a graph's orientation.

\begin{thm}[Theorem~2.1, \cite{ISUREU13}] \label{thm:fullnessISUREU}
Suppose $\beta$ is an integer-valued digraph parameter with the following properties for every oriented graph $\vec{G}$:
\begin{enumerate}
    \item $\beta(\vec{G}) = \beta(\vec{G}^T)$.
    \item If $(u,v) \in E(\vec{G})$ and $\vec{G_0}$ is obtained from $\vec{G}$ by replacing $(u,v)$ by $(v,u)$ (i.e., reversing the orientation of one arc), then $|\beta(\vec{G}) - \beta(\vec{G_0})| \leq 1$.
\end{enumerate}
Then for any two orientations $\vec{G_1}$ and $\vec{G_2}$ of the same graph $G$, $|\beta(\vec{G_2}) - \beta(\vec{G_1})| \leq \floor{\frac{|E(G)|}{2}}$.
\end{thm}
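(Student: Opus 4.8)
The plan is to parametrize the passage from $\vec{G_1}$ to $\vec{G_2}$ by the set of edges on which the two orientations disagree, and then to exploit property~(1) to obtain a second, complementary bound. Taking the smaller of the two bounds is what upgrades the trivial estimate $|E(G)|$ to the claimed $\floor{\frac{|E(G)|}{2}}$.

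First I would let $k$ denote the number of edges of $G$ whose orientation differs between $\vec{G_1}$ and $\vec{G_2}$. Reversing these $k$ arcs one at a time carries $\vec{G_1}$ to $\vec{G_2}$ through a chain of oriented graphs, each obtained from the previous by flipping a single arc. Applying property~(2) along this chain, together with the triangle inequality, gives $|\beta(\vec{G_1}) - \beta(\vec{G_2})| \leq k$. This is the straightforward half of the argument and mirrors exactly the arc-flip reasoning used in Lemma~\ref{lem: arcFlips} and Theorem~\ref{thm:fullness}.

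The key step is to produce a second bound of $|E(G)| - k$, and here I would invoke the transpose. Since $\vec{G_2}^T$ reverses \emph{every} arc of $\vec{G_2}$, it agrees with $\vec{G_1}$ on exactly the $k$ edges where $\vec{G_2}$ had disagreed with $\vec{G_1}$, and it disagrees on the remaining $|E(G)| - k$ edges. Thus $\vec{G_2}^T$ differs from $\vec{G_1}$ in precisely $|E(G)| - k$ arcs, so the same arc-flipping argument yields $|\beta(\vec{G_1}) - \beta(\vec{G_2}^T)| \leq |E(G)| - k$. Applying property~(1) we have $\beta(\vec{G_2}^T) = \beta(\vec{G_2})$, and hence $|\beta(\vec{G_1}) - \beta(\vec{G_2})| \leq |E(G)| - k$.

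Combining the two estimates gives $|\beta(\vec{G_1}) - \beta(\vec{G_2})| \leq \min(k,\, |E(G)| - k)$, and since the minimum of two nonnegative integers whose sum is $|E(G)|$ never exceeds $\floor{\frac{|E(G)|}{2}}$, the theorem follows. I expect the only delicate point to be the bookkeeping in the third paragraph: one must check carefully that passing to the transpose exactly interchanges the ``agreeing'' and ``disagreeing'' edge sets relative to $\vec{G_1}$, as this is precisely what makes the complementary count $|E(G)| - k$ come out correctly and lets the two bounds balance at $\floor{\frac{|E(G)|}{2}}$.
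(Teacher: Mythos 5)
Your proof is correct: the two bounds $k$ and $|E(G)|-k$ obtained from the arc-flip chain and from passing to the transpose $\vec{G_2}^T$ (using property (1)) combine exactly as you say, and each intermediate digraph in the chain is still an orientation of $G$, so property (2) applies at every step. The paper itself gives no proof of this statement---it is quoted from \cite{ISUREU13}---but your argument is the standard one for that result and is complete.
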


For the directed $(t,r)$ broadcast domination parameter, failure of the first condition in Theorem \ref{thm:fullnessISUREU} is perhaps most easily seen on the star on $n$ vertices, shown in Figure~\ref{fig:starFailingCondition}.

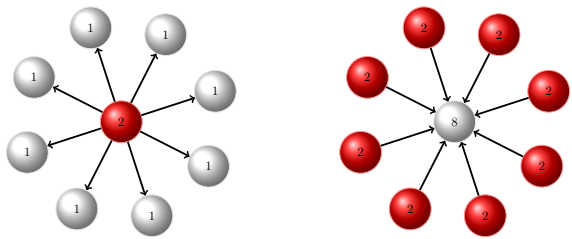
\begin{figure}[h!]
\centering
\resizebox{1.25in}{!}{
\begin{tikzpicture}
[vertex_style/.style={circle,ball color=white,draw=black!10!white},edge_style/.style={ thick, black,drop shadow={opacity=0.4}},vertex_style_red/.style={circle,ball color=red,draw=red!40!white},vertex_style_blue/.style={circle,ball color=blue,draw=blue!40!white,drop shadow={opacity=0.2}},edge_style/.style={thick, black,drop shadow={opacity=0.4}},rotate=18]
\node[vertex_style_red,minimum size=.5in](a0) at (0,0) {2};
\node[vertex_style,minimum size=.5in](a1) at (3,0) {1};
\node[vertex_style,minimum size=.5in](a2) at (2.1,2.1) {1};
\node[vertex_style,minimum size=.5in](a3) at (0,3) {1};
\node[vertex_style,minimum size=.5in](a4) at (-2.1,2.1) {1};
\node[vertex_style,minimum size=.5in](a5) at (-3,0) {1};
\node[vertex_style,minimum size=.5in](a6) at (-2.1,-2.1) {1};
\node[vertex_style,minimum size=.5in](a7) at (0,-3) {1};
\node[vertex_style,minimum size=.5in](a8) at (2.1,-2.1) {1};
\draw[->, ultra thick] (a0)--(a1);
\draw[->, ultra thick] (a0)--(a2);
\draw[->, ultra thick] (a0)--(a3);
\draw[->, ultra thick] (a0)--(a4);
\draw[->, ultra thick] (a0)--(a5);
\draw[->, ultra thick] (a0)--(a6);
\draw[->, ultra thick] (a0)--(a7);
\draw[->, ultra thick] (a0)--(a8);
\end{tikzpicture}
}
\hspace{1cm}
\resizebox{1.25in}{!}{
\begin{tikzpicture}
[vertex_style/.style={circle,ball color=white,draw=black!10!white},edge_style/.style={ thick, black,drop shadow={opacity=0.4}},vertex_style_red/.style={circle,ball color=red,draw=red!40!white},vertex_style_blue/.style={circle,ball color=blue,draw=blue!40!white,drop shadow={opacity=0.2}},edge_style/.style={thick, black,drop shadow={opacity=0.4}},rotate=18]
\node[vertex_style,minimum size=.5in](a0) at (0,0) {8};
\node[vertex_style_red,minimum size=.5in](a1) at (3,0) {2};
\node[vertex_style_red,minimum size=.5in](a2) at (2.1,2.1) {2};
\node[vertex_style_red,minimum size=.5in](a3) at (0,3) {2};
\node[vertex_style_red,minimum size=.5in](a4) at (-2.1,2.1) {2};
\node[vertex_style_red,minimum size=.5in](a5) at (-3,0) {2};
\node[vertex_style_red,minimum size=.5in](a6) at (-2.1,-2.1) {2};
\node[vertex_style_red,minimum size=.5in](a7) at (0,-3) {2};
\node[vertex_style_red,minimum size=.5in](a8) at (2.1,-2.1) {2};
\draw[<-, ultra thick] (a0)--(a1);
\draw[<-, ultra thick] (a0)--(a2);
\draw[<-, ultra thick] (a0)--(a3);
\draw[<-, ultra thick] (a0)--(a4);
\draw[<-, ultra thick] (a0)--(a5);
\draw[<-, ultra thick] (a0)--(a6);
\draw[<-, ultra thick] (a0)--(a7);
\draw[<-, ultra thick] (a0)--(a8);
\end{tikzpicture}
 }
\caption{For $(t,r) = (2,1)$, the digraph $\vec{G}$ on the left has domination number $1$, while the digraph $\vec{G}^T$ on the right has domination number $8$. Broadcasting vertices are highlighted in red.}
\label{fig:starFailingCondition}
\end{figure}

Moreover, for some graphs and given some positive integers $t$ and $r$, flipping an arc may change the $(t,r)$ broadcast domination number by more than 1, making Lemma~\ref{lem: arcFlips} not possible to generalize for arbitrary $t$ and $r$. An example of an instance where an arc flip changes the directed $(5,3)$ broadcast domination number of a graph from two to four can be seen in Figure~\ref{fig:flipIncreaseBy2}.

\begin{figure}[h!]
\centering
\resizebox{2in}{!}{
\begin{tikzpicture}[vertex_style/.style={circle,ball color=white,draw=black!10!white},
edge_style/.style={ thick, black, drop shadow={opacity=0.4}},vertex_style_red/.style={circle,ball color=red,draw=red!40!white},
edge_style/.style={thick, black,drop shadow={opacity=0.4}}]
\tikzstyle{ghost node}=[draw=none]
\tikzset{black node/.style={circle,draw=black, fill=white, inner sep=10.5}}
\tikzset{red node/.style={circle,draw=black, fill=red, inner sep=10.5}}

\node [vertex_style,minimum size=.5in] (12) at (2*1,2*2){2+1};
\node [vertex_style,minimum size=.5in] (21) at (2*2,2*1){2+1};
\node [vertex_style,minimum size=.5in] (22) at (2*2,2*2){3+2};
\node [vertex_style,minimum size=.5in] (23) at (2*2,2*3){2+1};
\node [vertex_style,minimum size=.5in] (31) at (2*3,2*1){3+2};
\node [vertex_style,minimum size=.5in] (32) at (2*3,2*2){4+3};
\node [vertex_style,minimum size=.5in] (33) at (2*3,2*3){3+2};
\node [vertex_style,minimum size=.5in] (41) at (2*4,2*1){4+3};
\node [vertex_style_red,minimum size=.5in] (42) at (2*4,2*2){5+4};
\node [vertex_style,minimum size=.5in] (43) at (2*4,2*3){4+3};
\node [vertex_style_red,minimum size=.5in] (52) at (2*5,2*2){5};
\draw [<-, ultra thick] (12)--(22);
\draw [<-, ultra thick] (22)--(32);
\draw [<-, ultra thick] (32)--(42);
\draw [<-, ultra thick] (42)--(52);
\draw [<-, ultra thick] (23)--(33);
\draw [<-, ultra thick] (33)--(43);
\draw [<-, ultra thick] (21)--(31);
\draw [<-, ultra thick] (31)--(41);
\draw [<-, ultra thick] (41)--(42);
\draw [->, ultra thick] (42)--(43);
\end{tikzpicture}}
\hspace{1cm}
\resizebox{2in}{!}{
\begin{tikzpicture}[vertex_style/.style={circle,ball color=white,draw=black!10!white},
edge_style/.style={ thick, black, drop shadow={opacity=0.4}},vertex_style_red/.style={circle,ball color=red,draw=red!40!white},
edge_style/.style={thick, black,drop shadow={opacity=0.4}}]
\tikzstyle{ghost node}=[draw=none]
\tikzset{black node/.style={circle,draw=black, fill=white, inner sep=10.5}}
\tikzset{red node/.style={circle,draw=black, fill=red, inner sep=10.5}}

\node [vertex_style,minimum size=.5in] (12) at (2*1,2*2){3+2};
\node [vertex_style,minimum size=.5in] (21) at (2*2,2*1){3+2};
\node [vertex_style,minimum size=.5in] (22) at (2*2,2*2){4+3};
\node [vertex_style,minimum size=.5in] (23) at (2*2,2*3){3+2};
\node [vertex_style,minimum size=.5in] (31) at (2*3,2*1){4+3};
\node [vertex_style_red,minimum size=.5in] (32) at (2*3,2*2){5+4};
\node [vertex_style,minimum size=.5in] (33) at (2*3,2*3){4+3};
\node [vertex_style_red,minimum size=.5in] (41) at (2*4,2*1){5+4};
\node [vertex_style_red,minimum size=.5in] (42) at (2*4,2*2){5};
\node [vertex_style_red,minimum size=.5in] (43) at (2*4,2*3){5+4};
\node [vertex_style,minimum size=.5in] (52) at (2*5,2*2){4};
\draw [<-, ultra thick] (12)--(22);
\draw [<-, ultra thick] (22)--(32);
\draw [<-, ultra thick] (32)--(42);
\draw [->, ultra thick, color = green] (42)--(52);
\draw [<-, ultra thick] (23)--(33);
\draw [<-, ultra thick] (33)--(43);
\draw [<-, ultra thick] (21)--(31);
\draw [<-, ultra thick] (31)--(41);
\draw [<-, ultra thick] (41)--(42);
\draw [->, ultra thick] (42)--(43);
\end{tikzpicture}}
\caption{Flipping the rightmost arc of the graph on the left increases the directed $(5,3)$ broadcast domination number from 2 to 4 (an increase of more than 1). Showing Lemma~\ref{lem: arcFlips} does not hold for $r>1$ in general. The graph on the right depicts the resultant graph and dominating set, with the flipped arc highlighted in green.}
\label{fig:flipIncreaseBy2}
\end{figure}
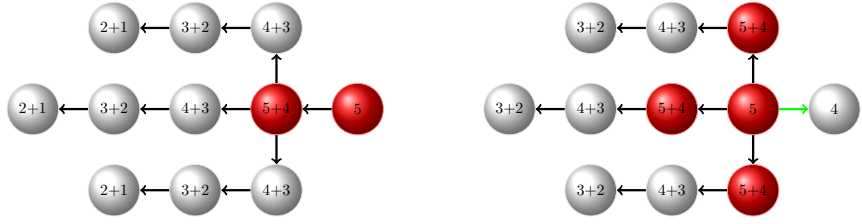

\subsection{Results on Small Grid Graphs} \label{subsec:small_grid}

In this section, we turn our attention to the directed $(t,r)$ broadcast domination interval on small grid graphs, a family of graphs discussed in \cite{blessing2014tr}. Before doing so, we begin with the definition of a grid graph and an observation about the relationship between the standard $(t,r)$ broadcast domination number of a graph $G$ and the directed $(t,r)$ broadcast domination number of $\vec{G}$, an orientation of that graph. 

Before defining the grid graph, we recall that the Cartesian product of graphs $G$ and $H$, denoted $G \Box H$, has vertex set $V(G) \times V(H)$, and it has edge set 
\[
E(G \Box H) = \left\{ \{(u_1, v_1), (u_2, v_2) \} \text{ such that } 
\begin{Bmatrix}
u_1 = u_2 \text{ and } \{v_1, v_2\} \in E(H); \text{ or }\\
 v_1 = v_2 \text{ and } \{u_1, u_2\} \in E(G)
\end{Bmatrix} \right\}.
\]

\begin{defn}
For positive integers $m,n$, the \textbf{grid graph with $m$ rows and $n$ columns}, denoted $G_{m,n}$, is the graph obtained by taking the Cartesian product $P_m \Box P_n$, where $P_i$ denotes the path graph on $i$ vertices.
\end{defn}

Now, we state a key observation about the directed $(t,r)$ broadcast domination number.

\begin{obs}\label{obs: inequality}
Given a graph $G$, let $\vec{G}$ be an arbitrary orientation of $G$. For positive integers $t$ and $r$, $\gamma_{t,r}(G) \leq \gamma_{t,r}(\vec{G})$.
\end{obs}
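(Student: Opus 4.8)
The plan is to show that any directed $(t,r)$ broadcast dominating set of an orientation $\vec{G}$ is automatically an (undirected) $(t,r)$ broadcast dominating set of the underlying graph $G$. The key structural fact is that orienting edges can only \emph{increase} distances: for any two vertices $u,w$, every directed path from $u$ to $w$ in $\vec{G}$ is also a path in $G$, so the undirected distance $d_G(u,w)$ is at most the directed distance $d_{\vec{G}}(u,w)$. Consequently the directed distance never falls below the undirected distance, with equality only in favorable cases.

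First I would fix an arbitrary directed $(t,r)$ broadcast dominating set $S \subseteq V(\vec{G})$ with $|S| = \gamma_{t,r}(\vec{G})$, and compare the reception $r(w)$ computed in $G$ with the directed reception $\vec{r}(w)$ computed in $\vec{G}$ for each vertex $w$. Because $d_G(v,w) \leq d_{\vec{G}}(v,w)$ for every $v \in S$, each summand $t - d_G(v,w)$ is at least the corresponding directed summand $t - d_{\vec{G}}(v,w)$; moreover the undirected broadcasting neighborhood $N_t(w)$ contains the directed in-neighborhood $N_t^-(w)$, so the undirected sum ranges over at least as many towers (and the extra terms are nonnegative). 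Combining these two effects yields $r(w) \geq \vec{r}(w) \geq r$ for every $w$, so $S$ is a $(t,r)$ broadcast dominating set of $G$.

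It then follows immediately that $\gamma_{t,r}(G) \leq |S| = \gamma_{t,r}(\vec{G})$, since the undirected domination number is the minimum over all dominating sets and $S$ is one such set. The main (and only real) obstacle is bookkeeping the two separate monotonicity claims carefully: that distances can only grow under orientation, and that the summation index set $N_t^-(w)$ in the directed reception is a subset of $N_t(w)$. Both are direct consequences of the fact that directed paths in $\vec{G}$ are a subcollection of undirected paths in $G$, so neither step requires more than the definitions already given; I would simply verify them cleanly and assemble the inequality $r(w) \geq \vec{r}(w)$ vertex by vertex.
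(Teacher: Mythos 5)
Your proof is correct and follows the same route as the paper: the paper likewise argues that any directed $(t,r)$ broadcast dominating set of $\vec{G}$ remains a $(t,r)$ broadcast dominating set of $G$ because undirecting the edges only lets reception flow more freely. You simply make explicit the two monotonicity facts (distances can only decrease and $N_t^-(w) \subseteq N_t(w)$) that the paper leaves implicit.
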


Observation~\ref{obs: inequality} follows from the fact that a $(t,r)$ broadcast dominating set of $\vec{G}$ is also necessarily a $(t,r)$ broadcast dominating set of $G$, as $G$ can be thought of as the result of replacing every singly-directed edge in $\vec{G}$ with a doubly-directed one, allowing reception to flow more freely within the undirected graph.
Given this observation, a natural question to ask is when this inequality becomes an equality. In other words, does there always exist an orientation $\vec{G}$ of a graph $G$ for which $\gamma_{t,r}(G) = \gamma_{t,r}(\vec{G})$?

To answer this question, we first highlight some known results for undirected grid graphs. In 2014, \cite{blessing2014tr} gave formulae for the $(t,r)$ broadcast domination numbers for $G_{m,n}$ when $m \in \{2,3,4,5\}$ for arbitrary $n \geq 0$ and small $t, r$. The authors determined these dominating patterns using an algorithm that proves that a certain pattern of broadcasting vertex placement is the optimal domination number. Some of these results are summarized below.

\begin{thm}[Theorems 2.1-2.5, \cite{blessing2014tr}]
\begin{enumerate}
    \item 
For $n \geq 3$, the $(2,2)$ broadcast domination number of the $3 \times n$ grid~is
$$
\gamma_{2,2}(G_{3 \times n}) = \ceil{\frac{4n}{3}}.
$$
\item For $n \geq 4$, the $(2,2)$ broadcast domination number of the $4 \times n$ grid~is
$$
\gamma_{2,2}(G_{4 \times n}) = 2n - \ceil{\frac{n-6}{4}}.
$$
\item For $n \geq 5$, the $(2,2)$ broadcast domination number of the $5 \times n$ grid~is
$$
\gamma_{2,2}(G_{5 \times n}) = 2n + \ceil{\frac{n+2}{7}}
$$
\item 
For $n \geq 3$, the $(3,1)$ broadcast domination number of the $3 \times n$ grid~is
$$
\gamma_{3,1}(G_{3 \times n}) = \ceil{\frac{n}{3}}.
$$
\item 
For $n \geq 4$, the $(3,1)$ broadcast domination number of the $4 \times n$ grid~is
$$
\gamma_{3,1}(G_{4 \times n}) = \floor{\frac{n+1}{7}} + \floor{\frac{n+3}{7}} + \floor{\frac{n+5}{7}} + 1.
$$
\end{enumerate}
\end{thm}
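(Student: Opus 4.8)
The plan is to prove each of the five formulas by establishing matching upper and lower bounds on $\gamma_{t,r}(G_{m\times n})$, handling all cases with a common two-part template. The key structural feature to exploit is that $t$ is small throughout ($t\in\{2,3\}$), so a broadcasting vertex influences only those vertices within distance $t-1$ of it. Consequently the reception at any vertex depends only on the towers lying in a bounded neighborhood, and this locality is what makes both the periodic constructions and the lower-bound accounting tractable for fixed row-count $m$.

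For the upper bounds, I would exhibit an explicit periodic placement of towers whose period in the $n$-direction matches the denominator appearing in the relevant ceiling or floor expression: period $3$ for parts (1) and (4), period $4$ for part (2), and period $7$ for parts (3) and (5). One then verifies directly that every vertex receives reception at least $r$. For instance, in the $(2,2)$ cases a non-tower vertex needs two tower neighbors (each contributing $t-1=1$), while a tower is self-sufficient since it receives $t=2$; it therefore suffices to check the local pattern around each vertex within one fundamental period and to treat the two boundary columns separately. Counting the towers used per period and adding the $O(1)$ boundary correction yields exactly the claimed closed form, with the $\ceil{\cdot}$ and $\floor{\cdot}$ arithmetic absorbing the dependence on $n \bmod (\text{period})$.

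The main obstacle is the lower bound: showing that no placement uses fewer towers. Here I would pass to a transfer-matrix (dynamic-programming) formulation. For fixed $m$, scan the grid column by column and track a bounded \emph{state} recording which of the last $t-1$ columns contain towers (enough to determine the reception contributed to upcoming columns) together with which recent vertices still lack sufficient reception. Minimum-cardinality domination then becomes a minimum-weight path in a finite directed state graph whose edge weights count newly placed towers; the asymptotic weight per column equals the coefficient of $n$ in the formula, while the constant term and residue-class behavior are read off from the transient portion of the optimal path. Pinning down the sharp per-column rate is the crux, and I would obtain it either by a discharging / linear-programming-duality argument—assigning each vertex a charge and redistributing tower reception so that the totals force the bound—or, as in the original algorithmic approach of Blessing et al., by a finite computation over the state graph certifying that the periodic pattern is optimal and that no periodic configuration of lower density dominates $G_{m\times n}$.
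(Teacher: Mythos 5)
This statement is not proved in the paper at all: it is quoted verbatim from Blessing et al.\ \cite{blessing2014tr}, and the only indication of method given here is that those authors established the formulas by exhibiting periodic tower patterns and certifying their optimality with a column-by-column dynamic-programming algorithm. Your proposal --- explicit periodic constructions for the upper bounds plus a transfer-matrix/state-graph computation for the lower bounds --- is essentially that same approach, and your outline is sound; the only caveat is that the lower-bound half remains a plan rather than an argument (the discharging/LP-duality alternative is not carried out, and the finite state-graph verification is precisely the nontrivial computation that constitutes the original proof).
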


While Blessing et.\ al.\ \cite{blessing2014tr} provide formulae for $(t,r) = (3,2)$ as well, we focus on this subset of results because of our considerations in the previous section. In light of Observation~\ref{obs: inequality}, these values immediately provide lower bounds for the directed $(t,r)$ broadcast domination intervals of their corresponding directed grid graphs for the same values $t$ and $r$, assuming these graphs can each be oriented to preserve the undirected $(t,r)$ broadcast dominating sets. The following observations argue that these lower bounds are in fact tight.

\begin{obs}
In the case $(t,r) = (2,2)$, the process of directing edges is simple: make all broadcasting vertices sources, thereby making all non-broadcasting vertices sinks. 
Since each vertex in $G$ is either a broadcasting vertex or a neighbor of at least two broadcasting vertices, it suffices to observe that, for every non-broadcasting vertex $v$ in $V(G)$, its resulting in-neighborhood in the directed graph $\vec{G}$ remains its previous neighborhood in the undirected graph $G$. 
Similarly, for every broadcasting vertex, its resulting out-neighborhood is exactly its neighborhood in the undirected graph $G$. 
Thus, each vertex gets the same reception as it previously did in the undirected graph.
\end{obs}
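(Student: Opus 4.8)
The plan is to exhibit an explicit orientation that preserves a minimum undirected dominating set, and then to invoke Observation~\ref{obs: inequality} to force equality. Concretely, I would begin with a minimum $(2,2)$ broadcast dominating set $S$ of the undirected graph $G$, so that $|S| = \gamma_{2,2}(G)$. Since $t = 2$, a broadcasting vertex transmits signal $2$ to itself and signal $1$ to each of its neighbors, and nothing beyond distance $1$. Hence the dominating condition $r(w) \geq 2$ forces every vertex $w \notin S$ to be adjacent to at least two members of $S$, while every vertex of $S$ is already satisfied by its own self-transmission $t - d(w,w) = 2$.

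Next I would define the orientation $\vec{G}$ by directing every edge joining a vertex of $S$ to a vertex outside $S$ away from the element of $S$ (from tower to non-tower), and orienting all remaining edges---those with both endpoints in $S$, or both endpoints outside $S$---arbitrarily. The claim is then that $S$ remains a directed $(2,2)$ broadcast dominating set of $\vec{G}$. For a vertex $v \in S$ this is immediate, since its directed reception $\vec{r}(v)$ already includes the self-transmission $t - d(v,v) = 2$. For a vertex $v \notin S$, each of its (at least two) neighbors $u \in S$ now has an arc $u \to v$, so each contributes $t - d(u,v) = 1$ to $\vec{r}(v)$, giving $\vec{r}(v) \geq 2$ as required.

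Having verified this, I would conclude $\gamma_{2,2}(\vec{G}) \leq |S| = \gamma_{2,2}(G)$, and then combine it with the reverse inequality $\gamma_{2,2}(G) \leq \gamma_{2,2}(\vec{G})$ supplied by Observation~\ref{obs: inequality} to obtain the desired equality $\gamma_{2,2}(\vec{G}) = \gamma_{2,2}(G)$. Since the argument is uniform over all $G$, this also shows the lower endpoint of $\dbdi{2,2}{G}$ equals $\gamma_{2,2}(G)$, confirming the tightness of the lower bounds coming from the undirected grid formulae.

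The one point requiring care---and what I would flag as the main obstacle to phrasing the construction as cleanly as ``make every tower a source''---is that two adjacent towers cannot both be sources, nor can two adjacent non-towers both be sinks, so the orientation is not literally ``all towers are sources.'' I would resolve this by noting that these particular edge orientations are irrelevant to the reception count: a tower's reception is met by its own self-transmission no matter how its incident arcs are directed, and a non-tower accrues reception only from its tower in-neighbors, so an arc between two non-towers contributes nothing. Thus only the tower-to-non-tower edges actually need to be controlled, and the arbitrary choices on every other edge cannot drop any vertex below the required reception of $2$.
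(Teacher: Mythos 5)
Your proof is correct and follows essentially the same construction as the paper's observation: orient the tower-to-non-tower edges away from the towers so that each non-tower retains all of its (at least two) tower in-neighbors, conclude that the undirected dominating set survives, and combine with Observation~\ref{obs: inequality} to force equality. Your version is in fact more careful than the paper's, which literally asserts that every broadcasting vertex becomes a source and every non-broadcasting vertex a sink --- impossible whenever two towers or two non-towers are adjacent --- whereas you correctly isolate the tower-to-non-tower edges as the only ones whose orientation affects any vertex's reception and let the rest be oriented arbitrarily.
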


\begin{obs}
In the case $(t,r) = (3,1)$, the intuition behind orienting the graph is to direct edges such that signal can travel as far outward from the broadcasting vertices as possible. We again begin by making all broadcasting vertices sources. Then, for each vertex which is an out-neighbor of a broadcasting vertex, direct the remainder of its not-already-directed edges outward. All other edges are incident to two non-broadcasting vertices which are each at a distance 2 away from any broadcasting vertex. This means that each of these non-broadcasting vertices receives signal 1 from that broadcasting vertex and cannot propagate signal further. As a result, this last set of edges can be oriented arbitrarily to complete the construction of a directed grid graph which maintains the $(3,1)$ dominating set. We are guaranteed that this set must in fact form a $(3,1)$ dominating set because, for any broadcasting vertex $v$, the broadcasting out-neighborhood of $v$ in the oriented graph is exactly the broadcasting neighborhood of $v$ in the undirected graph.
\end{obs}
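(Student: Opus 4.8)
The plan is to prove the clean statement underlying this observation: for the grid graph $G$ (indeed for any graph $G$), if $S$ is a minimum $(3,1)$ broadcast dominating set of the undirected $G$, then there is an orientation $\vec{G}$ in which $S$ remains a directed $(3,1)$ broadcast dominating set. Together with Observation~\ref{obs: inequality} this forces $\gamma_{3,1}(\vec{G}) = \gamma_{3,1}(G)$, so the lower bound is tight. The first move is to recast $(3,1)$ domination metrically: a tower of strength $t=3$ delivers signal $3-d$ to each vertex at distance $d\in\{0,1,2\}$ and nothing beyond, and since $r=1$, a set $S$ is a $(3,1)$ broadcast dominating set precisely when every vertex lies within distance $2$ of some element of $S$, i.e.\ $S$ is a distance-$2$ dominating set.

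Next I would build the orientation from the distance-to-$S$ function rather than from the informal ``process the out-neighbors'' description, which avoids any ambiguity about the order in which edges are oriented. Define $\delta(w) = \min_{v\in S} d(v,w)$, the undirected distance from $w$ to the nearest tower; by the reformulation above, $\delta(w)\le 2$ for every $w$. Orient each edge $\{a,b\}$ of $G$ from the endpoint of smaller $\delta$-value to the endpoint of larger $\delta$-value, breaking ties (when $\delta(a)=\delta(b)$) arbitrarily. This is a well-defined orientation $\vec{G}$ of $G$, since each edge is oriented by a single comparison of two numbers and the tie-breaking does not interact across edges.

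The heart of the argument is to check that $S$ dominates $\vec{G}$ in the directed sense, for which I would use the standard breadth-first-search fact that every vertex $w$ with $\delta(w)=k\ge 1$ has a neighbor $u$ with $\delta(u)=k-1$. A vertex $w$ with $\delta(w)=1$ has a tower neighbor $v$, and the edge is oriented $v\to w$, so $w$ sits at directed distance $1$ from $v$ and its directed reception is at least $3-1=2$. A vertex $w$ with $\delta(w)=2$ has a neighbor $u$ with $\delta(u)=1$, which in turn has a tower neighbor $v$; both edges are oriented forward ($v\to u$ since $0<1$, and $u\to w$ since $1<2$), giving a directed path $v\to u\to w$ of length $2$ and hence directed reception at least $3-2=1$. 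Towers themselves receive $3$. Thus $\vec{r}(w)\ge 1$ for every $w$, so $S$ is a directed $(3,1)$ broadcast dominating set and $\gamma_{3,1}(\vec{G})\le |S| = \gamma_{3,1}(G)$, while Observation~\ref{obs: inequality} supplies the reverse inequality.

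I expect the only delicate point to be the well-definedness and correctness of the orientation, precisely the issue glossed over by the phrase ``direct the remainder of its not-already-directed edges outward'': two adjacent out-neighbors of towers would each want to send their shared edge toward the other. Grounding the orientation in the monotone $\delta$-function sidesteps this cleanly, since every edge on the length-$\le 2$ tower-to-vertex paths we rely on strictly increases $\delta$, so the arbitrarily oriented equal-distance edges are never needed. I would also remark that nothing in the argument uses the grid structure or the value $t=3$: the same distance-to-$S$ orientation shows the lower bound of Observation~\ref{obs: inequality} is tight for every pair $(t,1)$ and every graph, because a $(t,1)$ broadcast dominating set is exactly a distance-$(t-1)$ dominating set.
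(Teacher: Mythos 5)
Your proof is correct and is, at bottom, the same idea as the paper's: orient edges so that signal flows outward from the towers, then check that every vertex still lies on a directed path of length at most $2$ from some tower. What you do differently is replace the paper's procedural description (``make towers sources, then orient the out-neighbors' remaining edges outward, then orient the rest arbitrarily'') with an orientation defined globally by the BFS layering $\delta(w)=\min_{v\in S}d(v,w)$, orienting each edge from smaller to larger $\delta$ and breaking ties arbitrarily. This buys you three things. First, it resolves a genuine imprecision in the paper's sketch: the claim that all edges left undirected after the second step join two vertices at distance $2$ from every tower is not literally true, since an edge can join two distance-$1$ vertices (two adjacent out-neighbors of towers each ``want'' to send that edge toward the other), and the paper's procedure only works because such an edge can in fact be oriented either way harmlessly --- which is exactly what your tie-breaking rule makes explicit. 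Second, it handles the (admittedly rare for grid patterns) case of two adjacent towers, where ``make all broadcasting vertices sources'' is not simultaneously satisfiable. Third, as you note, the argument uses neither the grid structure nor $t=3$: it shows that for every graph $G$ and every $t$, the inequality of Observation~\ref{obs: inequality} is tight for $(t,1)$, since a $(t,1)$ broadcast dominating set is precisely a distance-$(t-1)$ dominating set and every shortest tower-to-vertex path is strictly $\delta$-increasing, hence directed forward. This is a strictly stronger statement than the paper's observation and would be worth recording as such.
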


Now, for $m \in \{2,3,4\}$ we provide intervals of integers which are properly contained in the $\dbdi{2,2}{G_{m,n}}$. 
Note that since we have shown in the previous section that the $(2,2)$ and $(3,1)$ directed broadcast domination intervals of a graph must be full, finding an orientation which achieves a domination number greater than the lower bound immediately implies the existence of orientations which achieve all values in between as well. For each of the following small grid graphs, the upper bound provided is achieved by an orientation of $G_{m,n}$ which achieves the maximum possible number of vertices with in-degree 1. Note also that the OEIS sequence \textcolor{blue}{\href{http://oeis.org/search?q=A000111&language=english&go=Search}{A000111}} counts half the number of alternating permutations on $n$ letters. We let A000111$(n)$ denote the $n$th term of this sequence. We can now state the next result.

\begin{prop} \label{prop:GridProps}
Let $n \geq 1$. Then
\begin{enumerate}
    \item $\dbdi{2,2}{G_{2,n}} \supseteq \big[n, \floor{\frac{3n + 2}{2}} \big].$
    
    \item $\dbdi{2,2}{G_{3,n}} \supseteq \big[\ceil{\frac{4n}{3}}, \floor{\frac{3n + 2}{2}} + \floor{\frac{3n + 4}{4}}\big].$
    
    \item $\dbdi{2,2}{G_{4,n}} \supseteq \big[2n - \ceil{\frac{n-6}{4}}, \floor{\frac{3n + 2}{2}} + \floor{\frac{3n + 4}{4}} + k_n\big]$, where $$
    k_n =  \floor{\text{A000111}(n+1)/\text{A000111}(n)},
    $$
    which represents the number of towers in the fourth row with $n$ columns.
\end{enumerate}
\end{prop}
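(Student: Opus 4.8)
The plan is to exploit the special structure of the $(2,2)$ case. First I would record the key reduction: for $(t,r)=(2,2)$ a tower transmits signal $1$ to exactly its out-neighbours and nothing farther, so the directed reception of a vertex $w$ equals $2$ if $w$ is itself a tower plus the number of in-neighbours of $w$ that are towers. Consequently $S$ is a directed $(2,2)$ dominating set of an orientation $\vec{G}$ if and only if every vertex outside $S$ has at least two in-neighbours in $S$; in particular every vertex of in-degree at most $1$ is a \emph{forced tower} lying in every dominating set. Thus $\gamma_{2,2}(\vec{G})\ge \#\{v:\deg^-(v)\le 1\}$, with equality precisely when each remaining vertex (those of in-degree $\ge 2$) already has two tower in-neighbours. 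This reduces computing the extremal orientations to a counting problem about in-degrees.

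For the left endpoints I would argue that each is already the minimum of the interval. By the source-orientation observation (orient every tower as a source) the graph admits an orientation with $\gamma_{2,2}(\vec{G})=\gamma_{2,2}(G)$, and by Observation~\ref{obs: inequality} this is the smallest value over all orientations. Hence the left endpoint of $\dbdi{2,2}{G_{m,n}}$ equals $\gamma_{2,2}(G_{m,n})$, which is $n$, $\ceil{\frac{4n}{3}}$, and $2n-\ceil{\frac{n-6}{4}}$ in the three cases (the last two from the cited undirected theorem), matching the stated lower bounds. It then remains only to show each claimed right endpoint $D_m$ satisfies $D_m\le D:=\max_{\vec{G}}\gamma_{2,2}(\vec{G})$; for the containment $\supseteq$ it suffices to exhibit a single orientation attaining $D_m$, after which Theorem~\ref{thm:fullnessPart2} supplies every intermediate integer automatically.

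To hit the right endpoints I would construct orientations that maximize the number of forced towers, building them up one row at a time. For $G_{2,n}$ I would make a suitable independent set of degree-$3$ vertices into sinks (all three incident edges incoming) and give every other vertex in-degree exactly $1$; since $\sum_v\deg^-(v)=|E(G_{2,n})|=3n-2$ and $\deg^-(v)\le 3$, one gets $\#\{v:\deg^-(v)\le1\}\le\floor{\frac{3n+2}{2}}$, and an independent placement of the sinks realizes this bound while leaving each sink with three tower in-neighbours, hence dominated for free, so $\gamma_{2,2}(\vec{G})=\floor{\frac{3n+2}{2}}$. For $G_{3,n}$ and $G_{4,n}$ I would retain this two-row skeleton and then orient each newly added row to create as many in-degree-$\le1$ vertices as the edges descending from the row above permit, so that the third row contributes $\floor{\frac{3n+4}{4}}$ forced towers and the fourth contributes $k_n$, checking at each stage that the added row neither spoils the domination of the rows above nor leaves any in-degree-$\ge2$ vertex under-covered.

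The hard part will be the fourth-row count: proving that the maximum number of forced towers obtainable in the bottom row, subject to the in-degree constraints propagated down from the degree-$4$ interior row, is exactly $k_n=\floor{\text{A000111}(n+1)/\text{A000111}(n)}$. I expect this to reduce to an extremal count over the admissible in-degree patterns along the bottom path, where the constraint forced by the interior row is an alternating (up--down) condition of exactly the kind the zigzag numbers $\text{A000111}$ enumerate; identifying this recurrence and verifying that the resulting orientation is globally consistent is the delicate step, while the first three rows are routine. Once each right endpoint $D_m$ is realized by such an orientation we have $D_m\le D$, so $[\,d',D_m\,]\subseteq[d,D]=\dbdi{2,2}{G_{m,n}}$, and Theorem~\ref{thm:fullnessPart2} guarantees that every integer in this interval is attained, establishing the three containments.
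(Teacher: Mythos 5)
Your proposal follows essentially the same route as the paper: the lower endpoints come from the source orientation together with Observation~\ref{obs: inequality}, the upper endpoints come from orientations maximizing the number of vertices of in-degree at most~$1$ (which are forced towers in the $(2,2)$ case), a handshake-type count bounds how many such vertices an orientation can have, and Theorem~\ref{thm:fullnessPart2} fills in the intermediate values. Your version of the counting step is in fact the correct one --- the relevant identity is $\sum_v \deg^-(v)=|E(G)|$, as you use for $G_{2,n}$ (giving $3n-2\le k+3(2n-k)$ and hence $k\le\floor{\frac{3n+2}{2}}$); the paper's proof writes $\sum_v \deg^-(v)=|V(G)|$, which is a slip.

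The genuine gap is the one you flag yourself: part~(3) requires exhibiting an orientation of $G_{4,n}$ whose fourth row contributes exactly $k_n=\floor{\text{A000111}(n+1)/\text{A000111}(n)}$ forced towers, and your proposal only speculates that this will ``reduce to an extremal count'' governed by an alternating condition of the kind the zigzag numbers enumerate. That mechanism is not substantiated --- nothing in the in-degree bookkeeping for a path of degree-$\le 4$ vertices visibly produces a ratio of consecutive A000111 terms, and the asymptotics of that ratio (roughly $2n/\pi$) at least need to be checked against what a single row of $n$ vertices can plausibly contribute. So as written the third containment is not proved. To be fair, the paper's own proof does not close this step either: it displays the $n=10$ orientation in a figure and asserts the general count, so your proposal is no less complete than the source on this point, and is more careful on parts (1) and (2).
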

\begin{proof}
As previously stated, the orientation of $G_{m,n}$ which achieves the minimal value within its respective interval is achieved by orienting the graph to preserve the dominating sets stated in \cite{blessing2014tr}. The maximal values within these given intervals are equivalent to the maximum possible number of vertices of their respective grid graphs with in-degree equal to 1. Note that any vertex with in-degree equal to 1 must be a broadcasting vertex, because it cannot receive sufficient reception from its broadcasting in-neighborhood, even if it is a direct out-neighbor of a tower. For $n = 10$, examples of orientations with maximized numbers of vertices with in-degree 1 are given in Figures \ref{fig:G2n_maximal_dominating_set_tr22}, \ref{fig:G3n_maximal_dominating_set_tr22}, and \ref{fig:G4n_maximal_dominating_set_tr22}. To verify that these values do in fact correspond to the maximum possible number of vertices with in-degree 1 can be proven by contradiction as follows. Suppose that it is possible in each of these instances to have one more vertex with in-degree 1. Then since 
$$
\sum_{v \in V(G)} deg^-(v) = |V(G)|,
$$
there must exist a vertex with an impossibly large in-degree for the given equation to still hold. 
\end{proof}

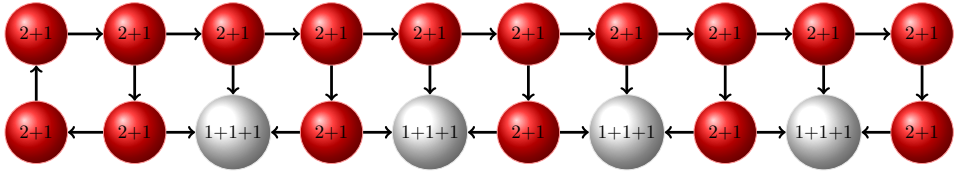
\begin{figure}[h!]
\centering
\resizebox{5in}{!}{
\begin{tikzpicture}[vertex_style/.style={circle,ball color=white,draw=black!10!white},
edge_style/.style={ thick, black, drop shadow={opacity=0.4}},vertex_style_red/.style={circle,ball color=red,draw=red!40!white},
edge_style/.style={thick, black,drop shadow={opacity=0.4}}]
\tikzstyle{ghost node}=[draw=none]
\tikzset{black node/.style={circle,draw=black, fill=white, inner sep=10.5}}
\tikzset{red node/.style={circle,draw=black, fill=red, inner sep=10.5}}

\node [vertex_style_red,minimum size=.5in] (11) at (2*1,2*1){2+1};
\node [vertex_style_red,minimum size=.5in] (12) at (2*1,2*2){2+1};
\node [vertex_style_red,minimum size=.5in] (21) at (2*2,2*1){2+1};
\node [vertex_style_red,minimum size=.5in] (22) at (2*2,2*2){2+1};
\node [vertex_style,minimum size=.5in] (31) at (2*3,2*1){1+1+1};
\node [vertex_style_red,minimum size=.5in] (32) at (2*3,2*2){2+1};
\node [vertex_style_red,minimum size=.5in] (41) at (2*4,2*1){2+1};
\node [vertex_style_red,minimum size=.5in] (42) at (2*4,2*2){2+1};
\node [vertex_style,minimum size=.5in] (51) at (2*5,2*1){1+1+1};
\node [vertex_style_red,minimum size=.5in] (52) at (2*5,2*2){2+1};
\node [vertex_style_red,minimum size=.5in] (61) at (2*6,2*1){2+1};
\node [vertex_style_red,minimum size=.5in] (62) at (2*6,2*2){2+1};
\node [vertex_style,minimum size=.5in] (71) at (2*7,2*1){1+1+1};
\node [vertex_style_red,minimum size=.5in] (72) at (2*7,2*2){2+1};
\node [vertex_style_red,minimum size=.5in] (81) at (2*8,2*1){2+1};
\node [vertex_style_red,minimum size=.5in] (82) at (2*8,2*2){2+1};
\node [vertex_style,minimum size=.5in] (91) at (2*9,2*1){1+1+1};
\node [vertex_style_red,minimum size=.5in] (92) at (2*9,2*2){2+1};
\node [vertex_style_red,minimum size=.5in] (101) at (2*10,2*1){2+1};
\node [vertex_style_red,minimum size=.5in] (102) at (2*10,2*2){2+1};

\draw[->, ultra thick] (12)--(22);
\draw[->, ultra thick] (22)--(32);
\draw[->, ultra thick] (32)--(42);
\draw[->, ultra thick] (42)--(52);
\draw[->, ultra thick] (52)--(62);
\draw[->, ultra thick] (62)--(72);
\draw[->, ultra thick] (72)--(82);
\draw[->, ultra thick] (82)--(92);
\draw[->, ultra thick] (92)--(102);

\draw[->, ultra thick] (21)--(11);
\draw[->, ultra thick] (21)--(31);
\draw[->, ultra thick] (41)--(31);
\draw[->, ultra thick] (41)--(51);
\draw[->, ultra thick] (61)--(51);
\draw[->, ultra thick] (61)--(71);
\draw[->, ultra thick] (81)--(71);
\draw[->, ultra thick] (81)--(91);
\draw[->, ultra thick] (101)--(91);

\draw[->, ultra thick] (11)--(12);
\draw[->, ultra thick] (22)--(21);
\draw[->, ultra thick] (32)--(31);
\draw[->, ultra thick] (42)--(41);
\draw[->, ultra thick] (52)--(51);
\draw[->, ultra thick] (62)--(61);
\draw[->, ultra thick] (72)--(71);
\draw[->, ultra thick] (82)--(81);
\draw[->, ultra thick] (92)--(91);
\draw[->, ultra thick] (102)--(101);

\end{tikzpicture}}
\caption{An example of the grid graph $G_{2,10}$ which achieves $\gamma_{2,2}(G_{2,n})~=~\ceil{\frac{3n+2}{2}}$}
\label{fig:G2n_maximal_dominating_set_tr22}
\end{figure}

\begin{figure}[h!]
\centering
\resizebox{5in}{!}{
\begin{tikzpicture}[vertex_style/.style={circle,ball color=white,draw=black!10!white},
edge_style/.style={ thick, black, drop shadow={opacity=0.4}},vertex_style_red/.style={circle,ball color=red,draw=red!40!white},
edge_style/.style={thick, black,drop shadow={opacity=0.4}}]
\tikzstyle{ghost node}=[draw=none]
\tikzset{black node/.style={circle,draw=black, fill=white, inner sep=10.5}}
\tikzset{red node/.style={circle,draw=black, fill=red, inner sep=10.5}}

\node [vertex_style_red,minimum size=.5in] (11) at (2*1,2*1){2+1};
\node [vertex_style_red,minimum size=.5in] (12) at (2*1,2*2){2+1};
\node [vertex_style_red,minimum size=.5in] (21) at (2*2,2*1){2+1};
\node [vertex_style_red,minimum size=.5in] (22) at (2*2,2*2){2+1};
\node [vertex_style,minimum size=.5in] (31) at (2*3,2*1){1+1+1+1};
\node [vertex_style_red,minimum size=.5in] (32) at (2*3,2*2){2+1};
\node [vertex_style_red,minimum size=.5in] (41) at (2*4,2*1){2+1};
\node [vertex_style_red,minimum size=.5in] (42) at (2*4,2*2){2+1};
\node [vertex_style,minimum size=.5in] (51) at (2*5,2*1){1+1+1+1};
\node [vertex_style_red,minimum size=.5in] (52) at (2*5,2*2){2+1};
\node [vertex_style_red,minimum size=.5in] (61) at (2*6,2*1){2+1};
\node [vertex_style_red,minimum size=.5in] (62) at (2*6,2*2){2+1};
\node [vertex_style,minimum size=.5in] (71) at (2*7,2*1){1+1+1+1};
\node [vertex_style_red,minimum size=.5in] (72) at (2*7,2*2){2+1};
\node [vertex_style_red,minimum size=.5in] (81) at (2*8,2*1){2+1};
\node [vertex_style_red,minimum size=.5in] (82) at (2*8,2*2){2+1};
\node [vertex_style,minimum size=.5in] (91) at (2*9,2*1){1+1+1+1};
\node [vertex_style_red,minimum size=.5in] (92) at (2*9,2*2){2+1};
\node [vertex_style_red,minimum size=.5in] (101) at (2*10,2*1){2+1};
\node [vertex_style_red,minimum size=.5in] (102) at (2*10,2*2){2+1};

\node [vertex_style_red,minimum size=.5in] (10) at (2*1,2*0){2+1};
\node [vertex_style,minimum size=.5in] (20) at (2*2,2*0){1+1+1};
\node [vertex_style_red,minimum size=.5in] (30) at (2*3,2*0){2+1};
\node [vertex_style_red,minimum size=.5in] (40) at (2*4,2*0){2+1};
\node [vertex_style_red,minimum size=.5in] (50) at (2*5,2*0){2+1};
\node [vertex_style,minimum size=.5in] (60) at (2*6,2*0){1+1+1};
\node [vertex_style_red,minimum size=.5in] (70) at (2*7,2*0){2+1};
\node [vertex_style_red,minimum size=.5in] (80) at (2*8,2*0){2+1};
\node [vertex_style_red,minimum size=.5in] (90) at (2*9,2*0){2+1};
\node [vertex_style,minimum size=.5in] (100) at (2*10,2*0){1+1};

\draw[->, ultra thick] (12)--(22);
\draw[->, ultra thick] (22)--(32);
\draw[->, ultra thick] (32)--(42);
\draw[->, ultra thick] (42)--(52);
\draw[->, ultra thick] (52)--(62);
\draw[->, ultra thick] (62)--(72);
\draw[->, ultra thick] (72)--(82);
\draw[->, ultra thick] (82)--(92);
\draw[->, ultra thick] (92)--(102);

\draw[->, ultra thick] (21)--(11);
\draw[->, ultra thick] (21)--(31);
\draw[->, ultra thick] (41)--(31);
\draw[->, ultra thick] (41)--(51);
\draw[->, ultra thick] (61)--(51);
\draw[->, ultra thick] (61)--(71);
\draw[->, ultra thick] (81)--(71);
\draw[->, ultra thick] (81)--(91);
\draw[->, ultra thick] (101)--(91);

\draw[->, ultra thick] (10)--(20);
\draw[->, ultra thick] (30)--(20);
\draw[->, ultra thick] (40)--(30);
\draw[->, ultra thick] (40)--(50);
\draw[->, ultra thick] (50)--(60);
\draw[->, ultra thick] (70)--(60);
\draw[->, ultra thick] (80)--(70);
\draw[->, ultra thick] (80)--(90);
\draw[->, ultra thick] (90)--(100);

\draw[->, ultra thick] (11)--(12);
\draw[->, ultra thick] (22)--(21);
\draw[->, ultra thick] (32)--(31);
\draw[->, ultra thick] (42)--(41);
\draw[->, ultra thick] (52)--(51);
\draw[->, ultra thick] (62)--(61);
\draw[->, ultra thick] (72)--(71);
\draw[->, ultra thick] (82)--(81);
\draw[->, ultra thick] (92)--(91);
\draw[->, ultra thick] (102)--(101);

\draw[->, ultra thick] (11)--(10);
\draw[->, ultra thick] (21)--(20);
\draw[->, ultra thick] (30)--(31);
\draw[->, ultra thick] (41)--(40);
\draw[->, ultra thick] (50)--(51);
\draw[->, ultra thick] (61)--(60);
\draw[->, ultra thick] (70)--(71);
\draw[->, ultra thick] (81)--(80);
\draw[->, ultra thick] (90)--(91);
\draw[->, ultra thick] (101)--(100);

\end{tikzpicture}}
\caption{An example of the grid graph $G_{3,10}$ which achieves 
$\gamma_{2,2}(G_{2,n})~=~\ceil{\frac{3n+2}{2}}~+~\floor{\frac{3n + 4}{4}}$. Note that the graph in Figure \ref{fig:G2n_maximal_dominating_set_tr22} is a subgraph.
}
\label{fig:G3n_maximal_dominating_set_tr22}
\end{figure}

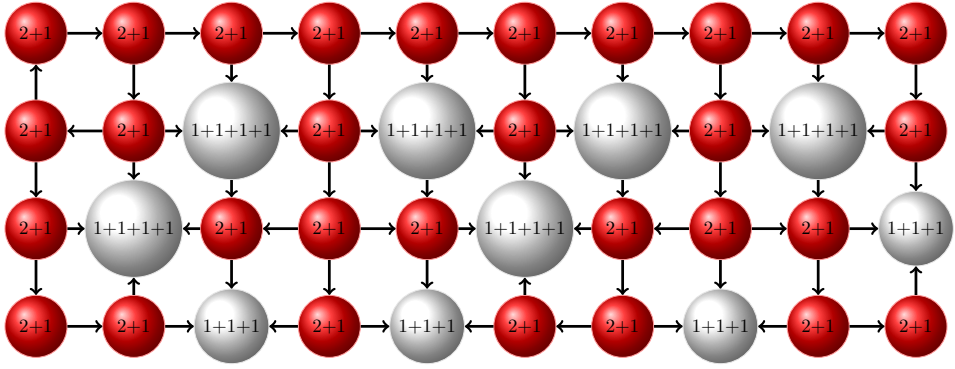
\begin{figure}[h!]
\centering
\resizebox{5in}{!}{
\begin{tikzpicture}[vertex_style/.style={circle,ball color=white,draw=black!10!white},
edge_style/.style={ thick, black, drop shadow={opacity=0.4}},vertex_style_red/.style={circle,ball color=red,draw=red!40!white},
edge_style/.style={thick, black,drop shadow={opacity=0.4}}]
\tikzstyle{ghost node}=[draw=none]
\tikzset{black node/.style={circle,draw=black, fill=white, inner sep=10.5}}
\tikzset{red node/.style={circle,draw=black, fill=red, inner sep=10.5}}

\node [vertex_style_red,minimum size=.5in] (11) at (2*1,2*1){2+1};
\node [vertex_style_red,minimum size=.5in] (12) at (2*1,2*2){2+1};
\node [vertex_style_red,minimum size=.5in] (21) at (2*2,2*1){2+1};
\node [vertex_style_red,minimum size=.5in] (22) at (2*2,2*2){2+1};
\node [vertex_style,minimum size=.5in] (31) at (2*3,2*1){1+1+1+1};
\node [vertex_style_red,minimum size=.5in] (32) at (2*3,2*2){2+1};
\node [vertex_style_red,minimum size=.5in] (41) at (2*4,2*1){2+1};
\node [vertex_style_red,minimum size=.5in] (42) at (2*4,2*2){2+1};
\node [vertex_style,minimum size=.5in] (51) at (2*5,2*1){1+1+1+1};
\node [vertex_style_red,minimum size=.5in] (52) at (2*5,2*2){2+1};
\node [vertex_style_red,minimum size=.5in] (61) at (2*6,2*1){2+1};
\node [vertex_style_red,minimum size=.5in] (62) at (2*6,2*2){2+1};
\node [vertex_style,minimum size=.5in] (71) at (2*7,2*1){1+1+1+1};
\node [vertex_style_red,minimum size=.5in] (72) at (2*7,2*2){2+1};
\node [vertex_style_red,minimum size=.5in] (81) at (2*8,2*1){2+1};
\node [vertex_style_red,minimum size=.5in] (82) at (2*8,2*2){2+1};
\node [vertex_style,minimum size=.5in] (91) at (2*9,2*1){1+1+1+1};
\node [vertex_style_red,minimum size=.5in] (92) at (2*9,2*2){2+1};
\node [vertex_style_red,minimum size=.5in] (101) at (2*10,2*1){2+1};
\node [vertex_style_red,minimum size=.5in] (102) at (2*10,2*2){2+1};

\node [vertex_style_red,minimum size=.5in] (10) at (2*1,2*0){2+1};
\node [vertex_style,minimum size=.5in] (20) at (2*2,2*0){1+1+1+1};
\node [vertex_style_red,minimum size=.5in] (30) at (2*3,2*0){2+1};
\node [vertex_style_red,minimum size=.5in] (40) at (2*4,2*0){2+1};
\node [vertex_style_red,minimum size=.5in] (50) at (2*5,2*0){2+1};
\node [vertex_style,minimum size=.5in] (60) at (2*6,2*0){1+1+1+1};
\node [vertex_style_red,minimum size=.5in] (70) at (2*7,2*0){2+1};
\node [vertex_style_red,minimum size=.5in] (80) at (2*8,2*0){2+1};
\node [vertex_style_red,minimum size=.5in] (90) at (2*9,2*0){2+1};
\node [vertex_style,minimum size=.5in] (100) at (2*10,2*0){1+1+1};

\node [vertex_style_red,minimum size=.5in] (1-1) at (2*1,2*-1){2+1};
\node [vertex_style_red,minimum size=.5in] (2-1) at (2*2,2*-1){2+1};
\node [vertex_style,minimum size=.5in] (3-1) at (2*3,2*-1){1+1+1};
\node [vertex_style_red,minimum size=.5in] (4-1) at (2*4,2*-1){2+1};
\node [vertex_style,minimum size=.5in] (5-1) at (2*5,2*-1){1+1+1};
\node [vertex_style_red,minimum size=.5in] (6-1) at (2*6,2*-1){2+1};
\node [vertex_style_red,minimum size=.5in] (7-1) at (2*7,2*-1){2+1};
\node [vertex_style,minimum size=.5in] (8-1) at (2*8,2*-1){1+1+1};
\node [vertex_style_red,minimum size=.5in] (9-1) at (2*9,2*-1){2+1};
\node [vertex_style_red,minimum size=.5in] (10-1) at (2*10,2*-1){2+1};

\draw[->, ultra thick] (12)--(22);
\draw[->, ultra thick] (22)--(32);
\draw[->, ultra thick] (32)--(42);
\draw[->, ultra thick] (42)--(52);
\draw[->, ultra thick] (52)--(62);
\draw[->, ultra thick] (62)--(72);
\draw[->, ultra thick] (72)--(82);
\draw[->, ultra thick] (82)--(92);
\draw[->, ultra thick] (92)--(102);

\draw[->, ultra thick] (21)--(11);
\draw[->, ultra thick] (21)--(31);
\draw[->, ultra thick] (41)--(31);
\draw[->, ultra thick] (41)--(51);
\draw[->, ultra thick] (61)--(51);
\draw[->, ultra thick] (61)--(71);
\draw[->, ultra thick] (81)--(71);
\draw[->, ultra thick] (81)--(91);
\draw[->, ultra thick] (101)--(91);

\draw[->, ultra thick] (10)--(20);
\draw[->, ultra thick] (30)--(20);
\draw[->, ultra thick] (40)--(30);
\draw[->, ultra thick] (40)--(50);
\draw[->, ultra thick] (50)--(60);
\draw[->, ultra thick] (70)--(60);
\draw[->, ultra thick] (80)--(70);
\draw[->, ultra thick] (80)--(90);
\draw[->, ultra thick] (90)--(100);

\draw[->, ultra thick] (1-1)--(2-1);
\draw[->, ultra thick] (2-1)--(3-1);
\draw[->, ultra thick] (4-1)--(3-1);
\draw[->, ultra thick] (4-1)--(5-1);
\draw[->, ultra thick] (6-1)--(5-1);
\draw[->, ultra thick] (7-1)--(6-1);
\draw[->, ultra thick] (7-1)--(8-1);
\draw[->, ultra thick] (9-1)--(8-1);
\draw[->, ultra thick] (9-1)--(10-1);

\draw[->, ultra thick] (11)--(12);
\draw[->, ultra thick] (22)--(21);
\draw[->, ultra thick] (32)--(31);
\draw[->, ultra thick] (42)--(41);
\draw[->, ultra thick] (52)--(51);
\draw[->, ultra thick] (62)--(61);
\draw[->, ultra thick] (72)--(71);
\draw[->, ultra thick] (82)--(81);
\draw[->, ultra thick] (92)--(91);
\draw[->, ultra thick] (102)--(101);

\draw[->, ultra thick] (11)--(10);
\draw[->, ultra thick] (21)--(20);
\draw[->, ultra thick] (31)--(30);
\draw[->, ultra thick] (41)--(40);
\draw[->, ultra thick] (51)--(50);
\draw[->, ultra thick] (61)--(60);
\draw[->, ultra thick] (71)--(70);
\draw[->, ultra thick] (81)--(80);
\draw[->, ultra thick] (91)--(90);
\draw[->, ultra thick] (101)--(100);

\draw[->, ultra thick] (10)--(1-1);
\draw[<-, ultra thick] (20)--(2-1);
\draw[->, ultra thick] (30)--(3-1);
\draw[->, ultra thick] (40)--(4-1);
\draw[->, ultra thick] (50)--(5-1);
\draw[<-, ultra thick] (60)--(6-1);
\draw[->, ultra thick] (70)--(7-1);
\draw[->, ultra thick] (80)--(8-1);
\draw[->, ultra thick] (90)--(9-1);
\draw[<-, ultra thick] (100)--(10-1);

\end{tikzpicture}}
\caption{An example of the grid graph $G_{4,10}$ which achieves 
$\gamma_{2,2}(G_{4,n})~=~\ceil{\frac{3n+2}{2}}~+~\floor{\frac{3n + 4}{4}}~+k_n$. Note that the graph in Figure \ref{fig:G3n_maximal_dominating_set_tr22} is a subgraph.
}
\label{fig:G4n_maximal_dominating_set_tr22}
\end{figure}

Unfortunately, showing set equality as opposed to the set containment proven in Proposition \ref{prop:GridProps} would require proof that all orientations of these grid graphs achieve $(t,r)$ broadcast domination interval strictly within this interval. In the case of $(2,2)$ domination, we conjecture that the proof lies in the fact that a vertex must be a dominating vertex if and only if it has in-degree at most 1, implying that the orientations provided above which maximize the number of such vertices achieve the maximal $(2,2)$ broadcast domination number. 

To aid in further understanding the directed $(t,r)$ broadcast domination intervals on small grid graphs, we provide
Sage Code \cite{Hollander2021}, which calculates the directed $(t,r)$ broadcast domination interval of an arbitrary grid graph given arbitrary positive integers $t$ and $r$. This program utilizes a dynamic programming algorithm, adapted from a similar program used in \cite{blessing2014tr}.

\subsection{Results on a Common Graph Family} \label{subsec:stars}

In the previous section, we found an interval of directed $(t,r)$ broadcast domination numbers which is contained in a (potentially larger) directed $(t,r)$ broadcast domination interval of small grid graphs. In this section, we walk through a full characterization of the directed $(t,r)$ broadcast domination interval for the star, a common and relatively simple-to-understand family of graphs. We begin by formally defining a star below.

\begin{defn}
A \textbf{star on $n$ vertices}, denoted $S_n$, consists of a single central vertex to which $n-1$ leaf vertices are adjacent.
\end{defn}

In general domination theory, the star is a simple example of a graph with domination number 1, as the central vertex is adjacent to all other vertices in the graph. Additionally, characterizing the undirected $(t,r)$ broadcast domination number of a star is also quite easy. Specifically, if $t > r$, then $\gamma_{t,r}(S_n) = 1$. Otherwise, $\gamma_{t,r}(S_n) = n$. However, once we consider all orientations of $S_n$, we see that characterizing the directed $(t,r)$ broadcast domination interval of a star becomes noticeably more involved. What follows is a full classification of $\dbdi{t,r}{S_n}$ for $n \geq 3$.

\begin{prop}\label{prop:firstStarProp}
Let $S_n$ be the star graph on $n$ vertices with $n \geq 3$, and let $t, r$ be integers such that $t = r$. If $t = r = 1$, then $\gamma_{t,r}(\vec{S_n}) = n$ for all orientations $\vec{S_n}$ of $S_n$. If $t = r = 2$, then $\dbdi{t,r}{S_n} = [n-1, n]$.
\end{prop}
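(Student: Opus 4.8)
The plan is to treat the two cases $t=r=1$ and $t=r=2$ separately, in each case reducing the problem to determining which vertices are forced into any dominating set. For $t=r=1$ I would first record that a tower of strength $1$ broadcasts only to itself: since $N_1(v) = \{w : d(w,v) < 1\} = \{v\}$, the directed reception is $\vec{r}(w) = 1$ when $w \in S$ and $\vec{r}(w)=0$ otherwise. Requiring $\vec{r}(w) \geq 1$ for every vertex $w$ then forces $S = V(\vec{S_n})$, regardless of the orientation, so $\gamma_{1,1}(\vec{S_n}) = n$. This half needs no case analysis on the orientation.

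For $t=r=2$ I would parametrize an orientation by how the $n-1$ edges at the center $c$ are directed: say $k$ edges point from $c$ to a leaf (the \emph{out-leaves}) and the remaining $n-1-k$ point from a leaf to $c$ (the \emph{in-leaves}). The first key step is to note that with $t=2$ the reception collapses to $\vec{r}(w) = 2\,[w \in S] + \#\{\text{in-neighbors of } w \text{ lying in } S\}$, since only $w$ itself and its in-neighbors belong to $N_2^-(w)$. I would then argue that every leaf must lie in any $(2,2)$ broadcast dominating set $S$: an in-leaf has no in-neighbor, so its only way to reach reception $\geq 2$ is to be a tower; an out-leaf has the single in-neighbor $c$, contributing at most $1$, so it too must be a tower. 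Hence all $n-1$ leaves are forced into $S$.

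The last step is the center. Because all in-leaves are towers, $\vec{r}(c) = 2\,[c \in S] + (n-1-k)$. I would conclude that $c$ can be omitted exactly when there are at least two in-leaves ($n-1-k \geq 2$), giving $\gamma_{2,2}(\vec{S_n}) = n-1$, while $c$ is forced in when $n-1-k \leq 1$, giving $\gamma_{2,2}(\vec{S_n}) = n$. To finish, I would exhibit the two extreme orientations — all edges directed into $c$ (so $n-1-k = n-1 \geq 2$, value $n-1$, using $n \geq 3$) and all edges directed out of $c$ (so $n-1-k = 0$, value $n$) — showing both endpoints are attained, so $A_{2,2}(S_n) = \{n-1,n\}$ and therefore $\dbdi{2,2}{S_n} = [n-1,n]$.

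The proof is short, and the only genuinely delicate point — which I would flag as the step to state carefully rather than a true obstacle — is the universal forcing of \emph{all} leaves into $S$ in the $(2,2)$ case; once that is in hand, the center analysis and the attainability of both endpoints are immediate. I would also double-check the role of the hypothesis $n \geq 3$, since it is precisely what guarantees the existence of an orientation with two or more in-leaves, and hence the realization of the lower endpoint $n-1$.
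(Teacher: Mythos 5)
Your proposal is correct and follows essentially the same route as the paper: force all leaves into any $(2,2)$ dominating set because a leaf receives at most $1$ from the center, then observe the center is dispensable exactly when at least two leaves are sources, and exhibit the two extreme orientations. Your version is just more explicit (the reception formula, the parameter $k$, and the role of $n \geq 3$), which the paper leaves implicit.
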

\begin{proof}
The case $t = r = 1$ is trivial because all vertices must be broadcasting vertices regardless of the graph orientation. If $t = r = 2$, notice in Figure~\ref{fig:t_rquals_r_baseCase} that regardless of the orientation of the graph, all leaves must be broadcasting vertices. This is because each leaf receives reception of strength at most 1 from the central vertex, but even this reception is insufficient to dominate that leaf, so $\gamma_{2,2}(S_n) \geq n-1$ is necessary. In the case that at most 1 leaf is a source, the central vertex does not receive sufficient reception from leaves and must be a broadcasting vertex. In the case that at least 2 vertices are sources, the central vertex receives reception at least 2 and need not be a broadcasting vertex. Thus $\dbdi{2,2}{S_n} = [n-1, n]$.
\end{proof}

\begin{figure}[h!]
\centering
\resizebox{1.25in}{!}{
\begin{tikzpicture}
[vertex_style/.style={circle,ball color=white,draw=black!10!white},edge_style/.style={ thick, black},vertex_style_red/.style={circle,ball color=red,draw=red!40!white},vertex_style_blue/.style={circle,ball color=blue,draw=blue!40!white},edge_style/.style={thick, black},rotate=18]
\node[vertex_style_red,minimum size=.5in](a0) at (0,0) {2};
\node[vertex_style_red,minimum size=.5in](a1) at (3,0) {3};
\node[vertex_style_red,minimum size=.5in](a2) at (2.1,2.1) {3};
\node[vertex_style_red,minimum size=.5in](a3) at (0,3) {3};
\node[vertex_style_red,minimum size=.5in](a4) at (-2.1,2.1) {3};
\node[vertex_style_red,minimum size=.5in](a5) at (-3,0) {3};
\node[vertex_style_red,minimum size=.5in](a6) at (-2.1,-2.1) {3};
\node[vertex_style_red,minimum size=.5in](a7) at (0,-3) {3};
\node[vertex_style_red,minimum size=.5in](a8) at (2.1,-2.1) {3};
\draw[->, ultra thick] (a0)--(a1);
\draw[->, ultra thick] (a0)--(a2);
\draw[->, ultra thick] (a0)--(a3);
\draw[->, ultra thick] (a0)--(a4);
\draw[->, ultra thick] (a0)--(a5);
\draw[->, ultra thick] (a0)--(a6);
\draw[->, ultra thick] (a0)--(a7);
\draw[->, ultra thick] (a0)--(a8);
\end{tikzpicture}
}
\hspace{.5cm}
\resizebox{1.25in}{!}{
\begin{tikzpicture}
[vertex_style/.style={circle,ball color=white,draw=black!10!white},edge_style/.style={ thick, black,drop shadow={opacity=0.4}},vertex_style_red/.style={circle,ball color=red,draw=red!40!white},vertex_style_blue/.style={circle,ball color=blue,draw=blue!40!white,drop shadow={opacity=0.2}},edge_style/.style={thick, black,drop shadow={opacity=0.4}},rotate=18]
\node[vertex_style_red,minimum size=.5in](a0) at (0,0) {3};
\node[vertex_style_red,minimum size=.5in](a1) at (3,0) {2};
\node[vertex_style_red,minimum size=.5in](a2) at (2.1,2.1) {3};
\node[vertex_style_red,minimum size=.5in](a3) at (0,3) {3};
\node[vertex_style_red,minimum size=.5in](a4) at (-2.1,2.1) {3};
\node[vertex_style_red,minimum size=.5in](a5) at (-3,0) {3};
\node[vertex_style_red,minimum size=.5in](a6) at (-2.1,-2.1) {3};
\node[vertex_style_red,minimum size=.5in](a7) at (0,-3) {3};
\node[vertex_style_red,minimum size=.5in](a8) at (2.1,-2.1) {3};
\draw[<-, ultra thick, color = green] (a0)--(a1);
\draw[->, ultra thick] (a0)--(a2);
\draw[->, ultra thick] (a0)--(a3);
\draw[->, ultra thick] (a0)--(a4);
\draw[->, ultra thick] (a0)--(a5);
\draw[->, ultra thick] (a0)--(a6);
\draw[->, ultra thick] (a0)--(a7);
\draw[->, ultra thick] (a0)--(a8);
\end{tikzpicture}
}
\includegraphics[width=.35in]{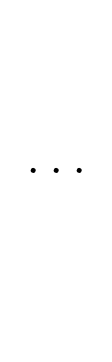}
\resizebox{1.25in}{!}{
\begin{tikzpicture}
[vertex_style/.style={circle,ball color=white,draw=black!10!white},edge_style/.style={ thick, black,drop shadow={opacity=0.4}},vertex_style_red/.style={circle,ball color=red,draw=red!40!white},vertex_style_blue/.style={circle,ball color=blue,draw=blue!40!white,drop shadow={opacity=0.2}},edge_style/.style={thick, black,drop shadow={opacity=0.4}},rotate=18]
\node[vertex_style,minimum size=.5in](a0) at (0,0) {7};
\node[vertex_style_red,minimum size=.5in](a1) at (3,0) {2};
\node[vertex_style_red,minimum size=.5in](a2) at (2.1,2.1) {2};
\node[vertex_style_red,minimum size=.5in](a3) at (0,3) {2};
\node[vertex_style_red,minimum size=.5in](a4) at (-2.1,2.1) {2};
\node[vertex_style_red,minimum size=.5in](a5) at (-3,0) {2};
\node[vertex_style_red,minimum size=.5in](a6) at (-2.1,-2.1) {2};
\node[vertex_style_red,minimum size=.5in](a7) at (0,-3) {2};
\node[vertex_style_red,minimum size=.5in](a8) at (2.1,-2.1) {2};
\draw[<-, ultra thick, color = green] (a0)--(a1);
\draw[<-, ultra thick, color = green] (a0)--(a2);
\draw[<-, ultra thick, color = green] (a0)--(a3);
\draw[<-, ultra thick, color = green] (a0)--(a4);
\draw[<-, ultra thick, color = green] (a0)--(a5);
\draw[<-, ultra thick, color = green] (a0)--(a6);
\draw[<-, ultra thick, color = green] (a0)--(a7);
\draw[->, ultra thick] (a0)--(a8);
\end{tikzpicture}
 }
\caption{If fewer than two leaves are sources, the center must be a broadcasting vertex. Flipped arcs are shown in green, and broadcasting vertices are highlighted in red.}
\label{fig:t_rquals_r_baseCase}
\end{figure}

\begin{prop}\label{prop:secondStarProp}
Let $S_n$ be the star graph on $n$ vertices with $n \geq 3$, and let $t, r$ be integers. If $t = r > 2$, then $\dbdi{t,r}{S_n} = [2,n]$.
\end{prop}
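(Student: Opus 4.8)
The plan is to use the definition $\dbdi{t,r}{S_n} = [\min A_{t,r}(S_n),\, \max A_{t,r}(S_n)]$ and compute the two endpoints directly, since the fullness machinery of Theorems~\ref{thm:fullness} and \ref{thm:fullnessPart2} is unavailable for $t=r>2$. Thus I would establish four facts: that every orientation satisfies $\gamma_{t,r}(\vec{S_n}) \geq 2$; that some orientation attains $2$; that every orientation satisfies $\gamma_{t,r}(\vec{S_n}) \leq n$ (immediate, as $|V(S_n)| = n$); and that some orientation attains $n$. The key structural observation used throughout is that in any orientation of $S_n$ every directed path has length at most $2$, since a leaf can reach another leaf only along $\ell' \to c \to \ell$ through the center $c$. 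Hence all relevant distances are $1$ or $2$, and because $t = r > 2$ we have $t-2 \geq 1 > 0$; this is precisely the positivity that will make the construction for the lower endpoint succeed.

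For the lower endpoint I would first show $\gamma_{t,r}(\vec{S_n}) \geq 2$ by ruling out a single dominating tower. If the lone tower is the center $c$, then any leaf $\ell$ receives at most $t - d(c,\ell) \leq t-1 < r$, so no leaf is dominated (and $n \geq 3$ guarantees a leaf exists). If the lone tower is a leaf, then the center receives at most $t-1 < r$ and is undominated. Hence at least two towers are always required. To attain $2$, I would orient one distinguished leaf $\ell_1 \to c$ and orient $c \to \ell_j$ for every other leaf $\ell_j$, and take $S = \{c, \ell_1\}$. Then $c$ receives $t$ from itself plus $t-1$ from $\ell_1$; the tower-leaf $\ell_1$ has in-degree $0$ and receives $t$ from itself; and each remaining leaf $\ell_j$ receives $t-1$ from $c$ at distance $1$ together with $t-2$ from $\ell_1$ along $\ell_1 \to c \to \ell_j$, for a total of $2t-3 \geq r$ exactly when $t \geq 3$. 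This gives $\gamma_{t,r}(\vec{S_n}) = 2$ for this orientation, so $\min A_{t,r}(S_n) = 2$.

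For the upper endpoint I would take the orientation with every edge pointing away from the center, $c \to \ell$ for all leaves $\ell$. Then the center has no in-neighbor, so it must itself be a tower to receive any signal. Each leaf $\ell$ has the center as its only in-neighbor—no leaf points into $c$, so no length-$2$ path reaches $\ell$—giving reception at most $t-1 < r$ unless $\ell$ is itself a tower. Therefore every vertex must broadcast, forcing $\gamma_{t,r}(\vec{S_n}) = n$ and hence $\max A_{t,r}(S_n) = n$. Combining the two endpoints yields $\dbdi{t,r}{S_n} = [2,n]$.

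The genuinely delicate step is the construction attaining the lower endpoint: it requires simultaneously routing sufficient signal to the center, to the tower-leaf, and to every ordinary leaf, and its verification hinges on the bound $t-2 \geq 1$. This is exactly where the hypothesis $t = r > 2$ is essential—the same orientation fails when $t = r = 2$, consistent with Proposition~\ref{prop:firstStarProp} giving $[n-1,n]$ in that case—so the main obstacle I anticipate is carrying out the reception bookkeeping correctly at the ordinary leaves $\ell_j$, which accumulate signal from two distinct towers at distances $1$ and $2$.
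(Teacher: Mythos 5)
Your proof is correct, but it takes a genuinely different route from the paper's. The paper parametrizes the orientations of $S_n$ by the number $s$ of source leaves and tracks $\gamma_{t,r}(S_n^s)$ as $s$ runs from $0$ to $n-1$ via successive arc flips, asserting the values $n, 2, 3, \ldots, n$; this yields the interval and, as a byproduct, the fullness of $\dbdi{t,r}{S_n}$, which is what Theorem~\ref{thm:starFullness} later cites. You instead compute only the two endpoints of $A_{t,r}(S_n)$: you rule out a single tower in every orientation, exhibit the one-source-leaf orientation with dominating set $\{c,\ell_1\}$ to attain $2$ (your bookkeeping $2t-3 \geq t$ iff $t \geq 3$ at the ordinary leaves is exactly the paper's $S_n^1$ computation), and exhibit the all-outward orientation to attain $n$. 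Since $\dbdi{t,r}{G}$ is by definition the interval $[\min A_{t,r}(G), \max A_{t,r}(G)]$, this suffices for the proposition as stated, and it is arguably more robust: the paper's intermediate claim that each flip beyond $s=1$ increases $\gamma_{t,r}$ by exactly $1$ actually fails for $t \geq 4$ (for $t=r=4$, the two source leaves of $S_n^2$ alone give every sink leaf reception $2(t-2)=4 \geq r$, so $\gamma_{t,r}(S_n^2)=2$, not $3$), an issue your endpoint-only argument never touches. The trade-off is that your argument does not establish that every integer in $[2,n]$ is attained; if fullness is wanted, as in Theorem~\ref{thm:starFullness}, a further argument determining $\gamma_{t,r}(S_n^s)$ for the intermediate values of $s$ would still be needed.
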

\begin{proof}
Let $s$ denote the number of source leaves in an orientation of $S_n$, and let $S_n^s$ denote the orientation of the star with $s$ source leaves. To show that $\dbdi{t,r}{S_n} = [2,n]$, we start with $S_n^0$, the leftmost graph in Figure~\ref{fig:t_rquals_r_otherCaseA}.  The central vertex is required to be a dominating vertex because it has in-degree zero, and all leaves receive insufficient reception from this central vertex, requiring them each to be dominating vertices as well and making the domination number in this orientation $n$. 
Flipping one arc, we move to the case $S_n^1$, where now the single source leaf and the central vertex form the dominating set because each sink leaf receives reception $(t-1) + (t-2) \geq r$. Both of these orientations are shown in Figure~\ref{fig:t_rquals_r_baseCase}.
We now proceed to flip the remaining arcs, one at a time, so that $s$ increases from $1$ to $n-1$. Each of these flips increases $\gamma_{t,r}$ by 1 because a leaf which has previously been a sink is now a source, and therefore it receives insufficient reception and must be added to the set of broadcasting vertices. The initial and final such orientations are shown in Figure~\ref{fig:t_rquals_r_otherCaseB}. 
Since this construction iterates over all possible orientations of $S_n$, we conclude by exhaustion that $\dbdi{t,r}{S_n} = [2,n]$.
\end{proof}

\begin{figure}[h!]
\centering
\resizebox{1.25in}{!}{
\begin{tikzpicture}
[vertex_style/.style={circle,ball color=white,draw=black!10!white},edge_style/.style={ thick, black,drop shadow={opacity=0.4}},vertex_style_red/.style={circle,ball color=red,draw=red!40!white},vertex_style_blue/.style={circle,ball color=blue,draw=blue!40!white,drop shadow={opacity=0.2}},edge_style/.style={thick, black,drop shadow={opacity=0.4}},rotate=18]
\node[vertex_style_red,minimum size=.6in](a0) at (0,0) {$t$};
\node[vertex_style_red,minimum size=.5in](a1) at (3,0) {$2t-1$};
\node[vertex_style_red,minimum size=.5in](a2) at (2.1,2.1) {$2t-1$};
\node[vertex_style_red,minimum size=.5in](a3) at (0,3) {$2t-1$};
\node[vertex_style_red,minimum size=.5in](a4) at (-2.1,2.1) {$2t-1$};
\node[vertex_style_red,minimum size=.5in](a5) at (-3,0) {$2t-1$};
\node[vertex_style_red,minimum size=.5in](a6) at (-2.1,-2.1) {$2t-1$};
\node[vertex_style_red,minimum size=.5in](a7) at (0,-3) {$2t-1$};
\node[vertex_style_red,minimum size=.5in](a8) at (2.1,-2.1) {$2t-1$};
\draw[->, ultra thick] (a0)--(a1);
\draw[->, ultra thick] (a0)--(a2);
\draw[->, ultra thick] (a0)--(a3);
\draw[->, ultra thick] (a0)--(a4);
\draw[->, ultra thick] (a0)--(a5);
\draw[->, ultra thick] (a0)--(a6);
\draw[->, ultra thick] (a0)--(a7);
\draw[->, ultra thick] (a0)--(a8);
\end{tikzpicture}
}
\hspace{1cm}
\resizebox{1.25in}{!}{
\begin{tikzpicture}
[vertex_style/.style={circle,ball color=white,draw=black!10!white},edge_style/.style={ thick, black,drop shadow={opacity=0.4}},vertex_style_red/.style={circle,ball color=red,draw=red!40!white},vertex_style_blue/.style={circle,ball color=blue,draw=blue!40!white,drop shadow={opacity=0.2}},edge_style/.style={thick, black,drop shadow={opacity=0.4}},rotate=18]
\node[vertex_style_red,minimum size=.6in](a0) at (0,0) {$2t-1$};
\node[vertex_style_red,minimum size=.6in](a1) at (3,0) {$t$};
\node[vertex_style,minimum size=.5in](a2) at (2.1,2.1) {$2t-3$};
\node[vertex_style,minimum size=.5in](a3) at (0,3) {$2t-3$};
\node[vertex_style,minimum size=.5in](a4) at (-2.1,2.1) {$2t-3$};
\node[vertex_style,minimum size=.5in](a5) at (-3,0) {$2t-3$};
\node[vertex_style,minimum size=.5in](a6) at (-2.1,-2.1) {$2t-3$};
\node[vertex_style,minimum size=.5in](a7) at (0,-3) {$2t-3$};
\node[vertex_style,minimum size=.5in](a8) at (2.1,-2.1) {$2t-3$};
\draw[<-, ultra thick, color = green] (a0)--(a1);
\draw[->, ultra thick] (a0)--(a2);
\draw[->, ultra thick] (a0)--(a3);
\draw[->, ultra thick] (a0)--(a4);
\draw[->, ultra thick] (a0)--(a5);
\draw[->, ultra thick] (a0)--(a6);
\draw[->, ultra thick] (a0)--(a7);
\draw[->, ultra thick] (a0)--(a8);
\end{tikzpicture}
}
\caption{The cases $S_n^0$ and $S_n^1$ are shown. Flipped arcs are shown in green, and broadcasting vertices are highlighted in red.}
\label{fig:t_rquals_r_otherCaseA}
\end{figure}
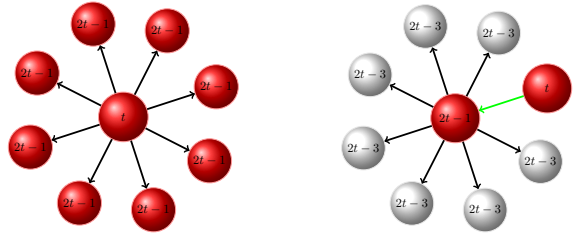

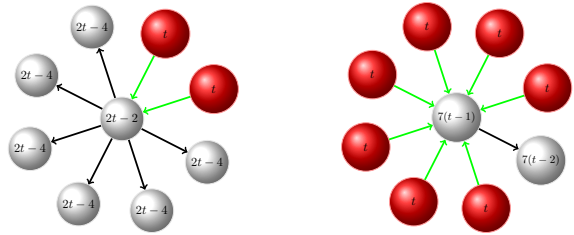
\begin{figure}[h!]
\centering
\resizebox{1.25in}{!}{
\begin{tikzpicture}
[vertex_style/.style={circle,ball color=white,draw=black!10!white},edge_style/.style={ thick, black,drop shadow={opacity=0.4}},vertex_style_red/.style={circle,ball color=red,draw=red!40!white},vertex_style_blue/.style={circle,ball color=blue,draw=blue!40!white,drop shadow={opacity=0.2}},edge_style/.style={thick, black,drop shadow={opacity=0.4}},rotate=18]
\node[vertex_style,minimum size=.5in](a0) at (0,0) {$2t-2$};
\node[vertex_style_red,minimum size=.6in](a1) at (3,0) {$t$};
\node[vertex_style_red,minimum size=.6in](a2) at (2.1,2.1) {$t$};
\node[vertex_style,minimum size=.5in](a3) at (0,3) {$2t-4$};
\node[vertex_style,minimum size=.5in](a4) at (-2.1,2.1) {$2t-4$};
\node[vertex_style,minimum size=.5in](a5) at (-3,0) {$2t-4$};
\node[vertex_style,minimum size=.5in](a6) at (-2.1,-2.1) {$2t-4$};
\node[vertex_style,minimum size=.5in](a7) at (0,-3) {$2t-4$};
\node[vertex_style,minimum size=.5in](a8) at (2.1,-2.1) {$2t-4$};
\draw[<-, ultra thick, color = green] (a0)--(a1);
\draw[<-, ultra thick, color = green] (a0)--(a2);
\draw[->, ultra thick] (a0)--(a3);
\draw[->, ultra thick] (a0)--(a4);
\draw[->, ultra thick] (a0)--(a5);
\draw[->, ultra thick] (a0)--(a6);
\draw[->, ultra thick] (a0)--(a7);
\draw[->, ultra thick] (a0)--(a8);
\end{tikzpicture}
}
\hspace{1cm}
\resizebox{1.25in}{!}{
\begin{tikzpicture}
[vertex_style/.style={circle,ball color=white,draw=black!10!white},edge_style/.style={ thick, black,drop shadow={opacity=0.4}},vertex_style_red/.style={circle,ball color=red,draw=red!40!white},vertex_style_blue/.style={circle,ball color=blue,draw=blue!40!white,drop shadow={opacity=0.2}},edge_style/.style={thick, black,drop shadow={opacity=0.4}},rotate=18]
\node[vertex_style,minimum size=.5in](a0) at (0,0) {\small$7(t-1)$};
\node[vertex_style_red,minimum size=.6in](a1) at (3,0) {$t$};
\node[vertex_style_red,minimum size=.6in](a2) at (2.1,2.1) {$t$};
\node[vertex_style_red,minimum size=.6in](a3) at (0,3) {$t$};
\node[vertex_style_red,minimum size=.6in](a4) at (-2.1,2.1) {$t$};
\node[vertex_style_red,minimum size=.6in](a5) at (-3,0) {$t$};
\node[vertex_style_red,minimum size=.6in](a6) at (-2.1,-2.1) {$t$};
\node[vertex_style_red,minimum size=.6in](a7) at (0,-3) {$t$};
\node[vertex_style,minimum size=.5in](a8) at (2.1,-2.1) {\small$7(t-2)$};
\draw[<-, ultra thick, color = green] (a0)--(a1);
\draw[<-, ultra thick, color = green] (a0)--(a2);
\draw[<-, ultra thick, color = green] (a0)--(a3);
\draw[<-, ultra thick, color = green] (a0)--(a4);
\draw[<-, ultra thick, color = green] (a0)--(a5);
\draw[<-, ultra thick, color = green] (a0)--(a6);
\draw[<-, ultra thick, color = green] (a0)--(a7);
\draw[->, ultra thick] (a0)--(a8);
\end{tikzpicture}
}
\caption{The cases $S_n^2$ and $S_n^{n-2}$ are shown. Flipped arcs are shown in green, and broadcasting vertices are highlighted in red.}
\label{fig:t_rquals_r_otherCaseB}
\end{figure}

\begin{prop} \label{prop:thirdStarProp}
Let $S_n$ be the star graph on $n$ vertices with $n \geq 3$, and let $t, r$ be integers. If $t > r$, then $\dbdi{t,r}{S_n} = [1,n-1]$.
\end{prop}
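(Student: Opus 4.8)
The plan is to pin down the two endpoints $d = \min(A_{t,r}(S_n))$ and $D = \max(A_{t,r}(S_n))$ separately, exploiting the fact that directed paths in a star are extremely restricted. Indeed, the only directed paths available are center-to-leaf (length $1$, when the edge points outward), leaf-to-center (length $1$, when the edge points inward), and leaf-to-leaf (length $2$, routed through the center). The governing arithmetic fact throughout is that, since $t$ and $r$ are integers with $t > r$, we have $t - 1 \geq r$; this is exactly what fails when $t = r$, and it explains why this proposition looks so different from Propositions~\ref{prop:firstStarProp} and~\ref{prop:secondStarProp}.

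For the lower endpoint I would use the orientation $S_n^0$ in which every edge points outward from the center. Then the center reaches every leaf at distance $1$, so a single tower placed at the center delivers reception $t - 1 \geq r$ to each leaf and $t \geq r$ to itself; hence $\gamma_{t,r}(S_n^0) = 1$. Since any broadcast dominating set must be nonempty, $\gamma_{t,r}(\vec{S_n}) \geq 1$ for every orientation, and therefore $d = 1$.

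For the upper endpoint I would show $\gamma_{t,r}(\vec{S_n}) \leq n-1$ for every orientation by a short case split on whether a \emph{sink leaf} (a leaf whose incident edge is oriented from the center) exists. If some leaf $\ell_0$ is a sink leaf, then taking all vertices except $\ell_0$ to be towers is dominating: $\ell_0$ receives $t - 1 \geq r$ from the center, and every other vertex is itself a tower, so this set of size $n-1$ works. Otherwise every edge points inward, which is precisely the orientation $S_n^{n-1}$; here the set consisting of all $n-1$ leaves dominates, since each leaf is a tower and the center receives at least $t-1 \geq r$ from any single one of them. Either way $D \leq n-1$. To see this bound is attained, I would observe that in $S_n^{n-1}$ a source leaf $\ell$ has in-neighborhood $N_t^-(\ell) = \{\ell\}$: the center has no out-arcs and so reaches nothing, and no other leaf can route signal to $\ell$. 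Thus every leaf is forced into any dominating set, giving $\gamma_{t,r}(S_n^{n-1}) = n-1$. Combining the two cases yields $D = n-1$, and since $\dbdi{t,r}{\cdot}$ is defined purely by its extreme values, we conclude $\dbdi{t,r}{S_n} = [1, n-1]$.

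The argument is mostly bookkeeping, and I do not expect a serious obstacle. The one place demanding care is the upper bound: I must verify that the ``drop one sink leaf'' construction genuinely covers every orientation possessing a sink leaf, and that the complementary all-inward orientation is correctly handled by the leaves-only set rather than requiring the center. I would also double-check the standing hypothesis $n \geq 3$ (so that $n-1 \geq 2$ and the center always has an abundant supply of reception in the all-inward case), and confirm that no fullness of the interval is needed here, since that is the content of the separate Theorem~\ref{thm:starFullness}.
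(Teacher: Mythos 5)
Your proof is correct, but it takes a genuinely different route from the paper's. The paper parametrizes orientations of $S_n$ by the number $s$ of source leaves, starts at $S_n^0$, and flips arcs one at a time, tracking $\gamma_{t,r}(S_n^s)$ for every $s$ from $0$ to $n-1$ (with a case split between $(t,r)=(2,1)$ and $t-r\geq 2$ to locate the single step at which the domination number fails to increase); this computes the entire set $A_{t,r}(S_n)$, not just its extremes. You instead pin down only the two endpoints: $d=1$ via the all-outward orientation, and $D=n-1$ via a uniform upper bound over all orientations (drop one sink leaf, or use all leaves in the all-inward case) together with the observation that in $S_n^{n-1}$ every leaf has empty in-neighborhood and is therefore forced into any dominating set. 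Your computations are all sound ($t>r$ with both integers gives $t-1\geq r$, which is exactly what each step uses), and since $\dbdi{t,r}{S_n}$ is defined purely as $[\min A_{t,r},\max A_{t,r}]$, your argument fully proves the stated proposition and is arguably cleaner. The trade-off is that the paper's orientation-by-orientation walk is what Theorem~\ref{thm:starFullness} invokes when it concludes ``by exhaustion, using the previous propositions'': for $t>r\geq 2$ neither Lemma~\ref{lem: arcFlips} nor Theorem~\ref{thm:fullnessPart2} applies, so the fullness of $[1,n-1]$ for the star really does rest on knowing the intermediate values $\gamma_{t,r}(S_n^s)$. If your proof replaced the paper's, Theorem~\ref{thm:starFullness} would need a supplementary argument (which your framework supplies easily enough: one checks that $\gamma_{t,r}(S_n^s)$ ranges over all of $\{1,\dots,n-1\}$ as $s$ varies, essentially reconstructing the paper's analysis).
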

\begin{proof}
Once again, let $s$ denote the number of source leaves in an orientation of $S_n$, and let $S_n^s$ denote the orientation of the star with $s$ source leaves. To show that $\dbdi{t,r}{S_n} = [1,n-1]$, we start with $S_n^0$.  The central vertex is required to be a dominating vertex because it has in-degree zero, and all leaves receive sufficient reception from this central vertex, making the domination number of this orientation 1.
We now proceed to flip the orientation of each edge so that $s$ increases from $0$ to $n-1$. 
If $(t,r) = (2,1)$, each of these flips increases the domination number by 1 except for the last flip, when $s$ increases from $n-2$ to $n-1$. This is because the set of broadcasting vertices gains the last leaf vertex but loses the central vertex when this arc is flipped. This case is highlighted in Figure~\ref{fig:t_greaterThan_r_A}.
For all other values of $(t,r)$, meaning all instances when $t - r \geq 2$, each of these arc flips increases the domination number of the resulting graph by 1 except for when $s$ increases from 1 to 2. This is true because when $s \geq 2$, the central vertex no longer needs to be a dominating vertex, as all vertices receive reception at least $2(t-2)$ from the source leaves. Note that, given the constraints, $2(t-2) \geq r$ for all $t \geq 3, t > r$, so this is indeed sufficient reception. This case is highlighted in Figures~\ref{fig:t_greaterThan_r_B_pt1} and~\ref{fig:t_greaterThan_r_B_pt2}.
In either case, we get that $\dbdi{t,r}{S_n} = [1,n-1]$.
\end{proof}

\begin{figure}[h!]
\centering
\resizebox{1.25in}{!}{
\begin{tikzpicture}
[vertex_style/.style={circle,ball color=white,draw=black!10!white},edge_style/.style={ thick, black,drop shadow={opacity=0.4}},vertex_style_red/.style={circle,ball color=red,draw=red!40!white},vertex_style_blue/.style={circle,ball color=blue,draw=blue!40!white,drop shadow={opacity=0.2}},edge_style/.style={thick, black,drop shadow={opacity=0.4}},rotate=18]
\node[vertex_style_red,minimum size=.5in](a0) at (0,0) {2};
\node[vertex_style,minimum size=.5in](a1) at (3,0) {1};
\node[vertex_style,minimum size=.5in](a2) at (2.1,2.1) {1};
\node[vertex_style,minimum size=.5in](a3) at (0,3) {1};
\node[vertex_style,minimum size=.5in](a4) at (-2.1,2.1) {1};
\node[vertex_style,minimum size=.5in](a5) at (-3,0) {1};
\node[vertex_style,minimum size=.5in](a6) at (-2.1,-2.1) {1};
\node[vertex_style,minimum size=.5in](a7) at (0,-3) {1};
\node[vertex_style,minimum size=.5in](a8) at (2.1,-2.1) {1};
\draw[->, ultra thick] (a0)--(a1);
\draw[->, ultra thick] (a0)--(a2);
\draw[->, ultra thick] (a0)--(a3);
\draw[->, ultra thick] (a0)--(a4);
\draw[->, ultra thick] (a0)--(a5);
\draw[->, ultra thick] (a0)--(a6);
\draw[->, ultra thick] (a0)--(a7);
\draw[->, ultra thick] (a0)--(a8);
\end{tikzpicture}
}
\includegraphics[width=.35in]{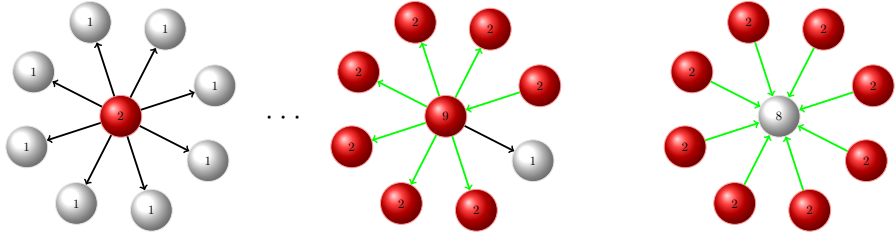}
\resizebox{1.25in}{!}{
\begin{tikzpicture}
[vertex_style/.style={circle,ball color=white,draw=black!10!white},edge_style/.style={ thick, black,drop shadow={opacity=0.4}},vertex_style_red/.style={circle,ball color=red,draw=red!40!white},vertex_style_blue/.style={circle,ball color=blue,draw=blue!40!white,drop shadow={opacity=0.2}},edge_style/.style={thick, black,drop shadow={opacity=0.4}},rotate=18]
\node[vertex_style_red,minimum size=.5in](a0) at (0,0) {9};
\node[vertex_style_red,minimum size=.5in](a1) at (3,0) {2};
\node[vertex_style_red,minimum size=.5in](a2) at (2.1,2.1) {2};
\node[vertex_style_red,minimum size=.5in](a3) at (0,3) {2};
\node[vertex_style_red,minimum size=.5in](a4) at (-2.1,2.1) {2};
\node[vertex_style_red,minimum size=.5in](a5) at (-3,0) {2};
\node[vertex_style_red,minimum size=.5in](a6) at (-2.1,-2.1) {2};
\node[vertex_style_red,minimum size=.5in](a7) at (0,-3) {2};
\node[vertex_style,minimum size=.5in](a8) at (2.1,-2.1) {1};
\draw[<-, ultra thick, color = green] (a0)--(a1);
\draw[->, ultra thick, color = green] (a0)--(a2);
\draw[->, ultra thick, color = green] (a0)--(a3);
\draw[->, ultra thick, color = green] (a0)--(a4);
\draw[->, ultra thick, color = green] (a0)--(a5);
\draw[->, ultra thick, color = green] (a0)--(a6);
\draw[->, ultra thick, color = green] (a0)--(a7);
\draw[->, ultra thick] (a0)--(a8);
\end{tikzpicture}
}
\hspace{1cm}
\resizebox{1.25in}{!}{
\begin{tikzpicture}
[vertex_style/.style={circle,ball color=white,draw=black!10!white},edge_style/.style={ thick, black,drop shadow={opacity=0.4}},vertex_style_red/.style={circle,ball color=red,draw=red!40!white},vertex_style_blue/.style={circle,ball color=blue,draw=blue!40!white,drop shadow={opacity=0.2}},edge_style/.style={thick, black,drop shadow={opacity=0.4}},rotate=18]
\node[vertex_style,minimum size=.5in](a0) at (0,0) {8};
\node[vertex_style_red,minimum size=.5in](a1) at (3,0) {2};
\node[vertex_style_red,minimum size=.5in](a2) at (2.1,2.1) {2};
\node[vertex_style_red,minimum size=.5in](a3) at (0,3) {2};
\node[vertex_style_red,minimum size=.5in](a4) at (-2.1,2.1) {2};
\node[vertex_style_red,minimum size=.5in](a5) at (-3,0) {2};
\node[vertex_style_red,minimum size=.5in](a6) at (-2.1,-2.1) {2};
\node[vertex_style_red,minimum size=.5in](a7) at (0,-3) {2};
\node[vertex_style_red,minimum size=.5in](a8) at (2.1,-2.1) {2};
\draw[<-, ultra thick, color = green] (a0)--(a1);
\draw[<-, ultra thick, color = green] (a0)--(a2);
\draw[<-, ultra thick, color = green] (a0)--(a3);
\draw[<-, ultra thick, color = green] (a0)--(a4);
\draw[<-, ultra thick, color = green] (a0)--(a5);
\draw[<-, ultra thick, color = green] (a0)--(a6);
\draw[<-, ultra thick, color = green] (a0)--(a7);
\draw[<-, ultra thick, color = green] (a0)--(a8);
\end{tikzpicture}
 }
\caption{The case $(t,r) = (2,1)$. Green arcs denote arcs which have been flipped from the original orientation $S_n^0$.}
\label{fig:t_greaterThan_r_A}
\end{figure}

\begin{figure}[h!]
\centering
\resizebox{1.25in}{!}{
\begin{tikzpicture}
[vertex_style/.style={circle,ball color=white,draw=black!10!white},edge_style/.style={ thick, black,drop shadow={opacity=0.4}},vertex_style_red/.style={circle,ball color=red,draw=red!40!white},vertex_style_blue/.style={circle,ball color=blue,draw=blue!40!white,drop shadow={opacity=0.2}},edge_style/.style={thick, black,drop shadow={opacity=0.4}},rotate=18]
\node[vertex_style_red,minimum size=.6in](a0) at (0,0) {$t$};
\node[vertex_style,minimum size=.6in](a1) at (3,0) {$t-1$};
\node[vertex_style,minimum size=.6in](a2) at (2.1,2.1) {$t-1$};
\node[vertex_style,minimum size=.6in](a3) at (0,3) {$t-1$};
\node[vertex_style,minimum size=.6in](a4) at (-2.1,2.1) {$t-1$};
\node[vertex_style,minimum size=.6in](a5) at (-3,0) {$t-1$};
\node[vertex_style,minimum size=.6in](a6) at (-2.1,-2.1) {$t-1$};
\node[vertex_style,minimum size=.6in](a7) at (0,-3) {$t-1$};
\node[vertex_style,minimum size=.6in](a8) at (2.1,-2.1) {$t-1$};
\draw[->, ultra thick] (a0)--(a1);
\draw[->, ultra thick] (a0)--(a2);
\draw[->, ultra thick] (a0)--(a3);
\draw[->, ultra thick] (a0)--(a4);
\draw[->, ultra thick] (a0)--(a5);
\draw[->, ultra thick] (a0)--(a6);
\draw[->, ultra thick] (a0)--(a7);
\draw[->, ultra thick] (a0)--(a8);
\end{tikzpicture}
}
\hspace{1cm}
\resizebox{1.25in}{!}{
\begin{tikzpicture}
[vertex_style/.style={circle,ball color=white,draw=black!10!white},edge_style/.style={ thick, black,drop shadow={opacity=0.4}},vertex_style_red/.style={circle,ball color=red,draw=red!40!white},vertex_style_blue/.style={circle,ball color=blue,draw=blue!40!white,drop shadow={opacity=0.2}},edge_style/.style={thick, black,drop shadow={opacity=0.4}},rotate=18]
\node[vertex_style_red,minimum size=.6in](a0) at (0,0) {$2t-1$};
\node[vertex_style_red,minimum size=.6in](a1) at (3,0) {$t$};
\node[vertex_style,minimum size=.6in](a2) at (2.1,2.1) {$2t-3$};
\node[vertex_style,minimum size=.6in](a3) at (0,3) {$2t-3$};
\node[vertex_style,minimum size=.6in](a4) at (-2.1,2.1) {$2t-3$};
\node[vertex_style,minimum size=.6in](a5) at (-3,0) {$2t-3$};
\node[vertex_style,minimum size=.6in](a6) at (-2.1,-2.1) {$2t-3$};
\node[vertex_style,minimum size=.6in](a7) at (0,-3) {$2t-3$};
\node[vertex_style,minimum size=.6in](a8) at (2.1,-2.1) {$2t-3$};
\draw[<-, ultra thick, color = green] (a0)--(a1);
\draw[->, ultra thick] (a0)--(a2);
\draw[->, ultra thick] (a0)--(a3);
\draw[->, ultra thick] (a0)--(a4);
\draw[->, ultra thick] (a0)--(a5);
\draw[->, ultra thick] (a0)--(a6);
\draw[->, ultra thick] (a0)--(a7);
\draw[->, ultra thick] (a0)--(a8);
\end{tikzpicture}
}
\caption{Orientations $S_n^0$ and $S_n^1$ with $t > r$. Green arcs denote arcs which have been flipped from the original orientation $S_n^0$, and broadcasting vertices are highlighted in red.}
\label{fig:t_greaterThan_r_B_pt1}
\end{figure}
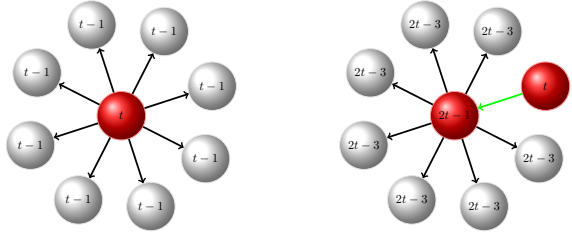

\begin{figure}[h!]
\centering
\resizebox{1.25in}{!}{
\begin{tikzpicture}
[vertex_style/.style={circle,ball color=white,draw=black!10!white},edge_style/.style={ thick, black,drop shadow={opacity=0.4}},vertex_style_red/.style={circle,ball color=red,draw=red!40!white},vertex_style_blue/.style={circle,ball color=blue,draw=blue!40!white,drop shadow={opacity=0.2}},edge_style/.style={thick, black,drop shadow={opacity=0.4}},rotate=18]
\node[vertex_style,minimum size=.6in](a0) at (0,0) {$2t-2$};
\node[vertex_style_red,minimum size=.6in](a1) at (3,0) {$t$};
\node[vertex_style_red,minimum size=.6in](a2) at (2.1,2.1) {$t$};
\node[vertex_style,minimum size=.6in](a3) at (0,3) {$2t-4$};
\node[vertex_style,minimum size=.6in](a4) at (-2.1,2.1) {$2t-4$};
\node[vertex_style,minimum size=.6in](a5) at (-3,0) {$2t-4$};
\node[vertex_style,minimum size=.6in](a6) at (-2.1,-2.1) {$2t-4$};
\node[vertex_style,minimum size=.6in](a7) at (0,-3) {$2t-4$};
\node[vertex_style,minimum size=.6in](a8) at (2.1,-2.1) {$2t-4$};
\draw[<-, ultra thick, color = green] (a0)--(a1);
\draw[<-, ultra thick, color = green] (a0)--(a2);
\draw[->, ultra thick] (a0)--(a3);
\draw[->, ultra thick] (a0)--(a4);
\draw[->, ultra thick] (a0)--(a5);
\draw[->, ultra thick] (a0)--(a6);
\draw[->, ultra thick] (a0)--(a7);
\draw[->, ultra thick] (a0)--(a8);
\end{tikzpicture}}
\includegraphics[width=.3in]{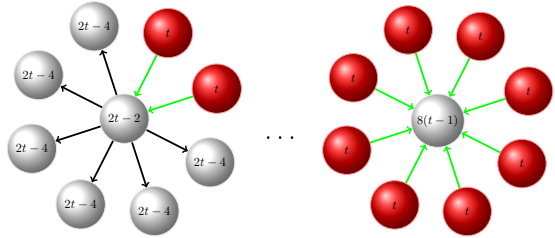}
\resizebox{1.25in}{!}{
\begin{tikzpicture}
[vertex_style/.style={circle,ball color=white,draw=black!10!white},edge_style/.style={ thick, black,drop shadow={opacity=0.4}},vertex_style_red/.style={circle,ball color=red,draw=red!40!white},vertex_style_blue/.style={circle,ball color=blue,draw=blue!40!white,drop shadow={opacity=0.2}},edge_style/.style={thick, black,drop shadow={opacity=0.4}},rotate=18]
\node[vertex_style,minimum size=.6in](a0) at (0,0) {$8(t-1)$};
\node[vertex_style_red,minimum size=.6in](a1) at (3,0) {$t$};
\node[vertex_style_red,minimum size=.6in](a2) at (2.1,2.1) {$t$};
\node[vertex_style_red,minimum size=.6in](a3) at (0,3) {$t$};
\node[vertex_style_red,minimum size=.6in](a4) at (-2.1,2.1) {$t$};
\node[vertex_style_red,minimum size=.6in](a5) at (-3,0) {$t$};
\node[vertex_style_red,minimum size=.6in](a6) at (-2.1,-2.1) {$t$};
\node[vertex_style_red,minimum size=.6in](a7) at (0,-3) {$t$};
\node[vertex_style_red,minimum size=.6in](a8) at (2.1,-2.1) {$t$};
\draw[<-, ultra thick, color = green] (a0)--(a1);
\draw[<-, ultra thick, color = green] (a0)--(a2);
\draw[<-, ultra thick, color = green] (a0)--(a3);
\draw[<-, ultra thick, color = green] (a0)--(a4);
\draw[<-, ultra thick, color = green] (a0)--(a5);
\draw[<-, ultra thick, color = green] (a0)--(a6);
\draw[<-, ultra thick, color = green] (a0)--(a7);
\draw[<-, ultra thick, color = green] (a0)--(a8);
\end{tikzpicture}
}
\caption{All other orientations with $t > r$. Purple arcs denote arcs which have been flipped from the original orientation $S_n^0$, and broadcasting vertices are highlighted in red.}
\label{fig:t_greaterThan_r_B_pt2}
\end{figure}

We remark that this result leads to a noteworthy result about the $\dbdi{t,r}{S_n}$.

\begin{thm}\label{thm:starFullness}
Let $t,r,d,D$ be positive integers such that $[d,D] = \dbdi{t,r}{S_n}$ and $t \geq r$. Then, for every $x \in [d,D]$, there exists an orientation $\vec{S}_{n_x}$ such that $\gamma_{t,r}(\vec{S}_{n_x}) = x$. In other words, the directed $(t,r)$ broadcast domination interval of a star on $n$ vertices is always full.
\end{thm}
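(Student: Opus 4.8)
The plan is to use the symmetry of the star to collapse the problem to a single integer-valued function of one variable and then read off fullness from that function. Because every automorphism of $S_n$ permutes the $n-1$ leaves and $\gamma_{t,r}$ is an isomorphism invariant of a digraph, any two orientations of $S_n$ with the same number of source leaves (leaves whose incident arc points toward the center) are isomorphic as digraphs and hence share the same directed $(t,r)$ broadcast domination number. Writing $S_n^s$ for the orientation with exactly $s$ source leaves and setting $f(s) := \gamma_{t,r}(S_n^s)$, we therefore have $A_{t,r}(S_n) = \{f(s) : 0 \le s \le n-1\}$, and it suffices to prove that this set of values equals $[d,D]\cap\Z$.

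It is worth emphasizing at the outset why the machinery of Lemma~\ref{lem: arcFlips} and Theorem~\ref{thm:fullness} does not apply here. As Figure~\ref{fig:starFailingCondition} already shows, flipping a single arc of a star can change the directed $(t,r)$ broadcast domination number by far more than $1$; indeed $f(0)$ and $f(1)$ differ by roughly $n$ in several of the cases below. Thus there is no single-arc staircase with unit steps joining the extremal orientations, and fullness must instead be obtained by computing $f$ exactly and checking that its values, taken over all $s$, leave no gaps.

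The computation of $f$ follows the case division of Propositions~\ref{prop:firstStarProp}, \ref{prop:secondStarProp}, and~\ref{prop:thirdStarProp}; in each case $f$ turns out to consist of one increasing run of consecutive integers together with a single boundary value, and these together realize every integer of $[d,D]$. If $t=r=1$ then $f\equiv n$ and $[n,n]$ is trivially full. If $t=r=2$ then every leaf is forced to be a tower, so $f(0)=f(1)=n$ and $f(s)=n-1$ for $s\ge 2$, covering $\{n-1,n\}$. If $t=r\ge 3$ then $f(0)=n$, while for $s\ge 1$ the $s$ source leaves are forced towers and the center is needed only until their combined reception reaches $r$; this makes $f$ realize every integer from $2$ up to $n$. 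Finally, if $t>r$ then $f(0)=1$, while for $s\ge 1$ the source leaves, together with the center only in the borderline subcase $(t,r)=(2,1)$, already dominate, so that $f$ realizes every integer from $1$ up to $n-1$. In each case the union of the realized values is exactly $[d,D]\cap\Z$.

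The step that deserves the most care, and the main obstacle, is the exact determination of $f(s)$ for the small values of $s$ at which the center vertex transitions from being forced into every dominating set to being dominated ``for free'' by its source-leaf in-neighbors. The location of this transition depends delicately on whether $t=r$ or $t>r$ and on the magnitude of $t-r$ --- for example, on whether a sink leaf at distance $2$ from a single source leaf already receives reception $t-2 \ge r$ --- and it is precisely here that an off-by-one in $f$ could in principle open a gap in the interval. Once I confirm that the increasing portion of $f$ is an unbroken run of consecutive integers terminating at the correct endpoint, and that the boundary value at $s=0$ supplies the opposite endpoint, no integer of $[d,D]$ is skipped and the stated fullness follows.
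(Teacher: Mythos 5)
Your proposal is correct and follows essentially the same route as the paper: the paper's proof of Theorem~\ref{thm:starFullness} simply defers ``by exhaustion'' to Propositions~\ref{prop:firstStarProp}, \ref{prop:secondStarProp}, and~\ref{prop:thirdStarProp}, which carry out exactly the case analysis you describe --- parametrizing orientations by the number $s$ of source leaves and tracking $\gamma_{t,r}(S_n^s)$ as $s$ ranges over $0,\dots,n-1$. Your explicit observation that all orientations with the same $s$ are isomorphic digraphs (so the domination number is a function of $s$ alone) is left implicit in the paper, and your care about the exact location of the transition where the center stops being a forced tower is warranted, but it does not change the approach or the conclusion.
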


\begin{proof}
By exhaustion, using the previous propositions within this section.
\end{proof}

As exhibited by the star graph, finding the directed $(t,r)$ broadcast domination interval can be very difficult, even for simple families of graphs. Moreover, even if finding a dominating ``strategy" may be straightforward, proving that the interval resulting from that ``strategy" is equal to that graph's $(t,r)$ directed broadcast domination interval can be quite difficult, as exhibited by our findings from this section and from Section \ref{subsec:small_grid}.

\section{On Directed \texorpdfstring{$(t,r)$}{(t,r)} Broadcast Domination of the Infinite Grid} \label{chap:infinite_grid}

We now shift our discussion to $(t,r)$ broadcast domination on the infinite grid graph, denoted by $\mathbb{Z}\times\mathbb{Z}$. In previous literature, Blessing et al.\ and  Harris et al.\ discuss efficient domination of the infinite Cartesian and triangular lattices, respectively \cite{blessing2014tr,harris2018broadcast}. 
In this section, we introduce and provide some initial results when considering the extension of their work on efficient $(t,r)$ broadcast domination of the infinite Cartesian lattice to the directed variant.
We remark that because the infinite grid cannot have a finite domination number, efficiency is instead measured as follows. 
Intuitively, an efficient broadcast dominating set is one which wastes the least amount of signal possible.
We give a rigorous definition of this idea below, and we note that this definition holds for any orientation $\vec{G}$ of $G$.

\begin{defn} (\cite{harris2018broadcast})
A $(t,r)$ broadcast dominating set $S$ for $G$ is said to be \textbf{efficient} if for all $u \in V(G)$,
$$
r(u) = \begin{cases} 
r &\mbox{if } d(u,v) \geq t - r \text{ for all }v \in S \\
r - d(u,v) & \mbox{if } 0\leq d(u,v) < t - r \text{ for exactly one }v \in S.
\end{cases}
$$
\end{defn}

In other words, non-broadcasting vertices far away from broadcasting vertices receive only the minimum required signal and do not `waste' signal by receiving more than they need. Of course, vertices which are relatively close to broadcasting vertices are not penalized for having more than the minimum required reception because they must continue transmitting signal to vertices which are farther away.

Now equipped with a notion of efficiency, we present our main result.

\begin{thm} \label{thm:one_third_two_third}
There exist orientations of the graph $\mathbb{Z}\times\mathbb{Z}$ that achieve efficient directed $(2,2)$ broadcast domination densities of $\frac13$, $\frac12$, and $\frac23$.
\end{thm}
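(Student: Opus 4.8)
The claim is a pure existence statement: I need to exhibit, for each target density $\frac13$, $\frac12$, $\frac23$, an orientation $\vec{G}$ of the infinite grid $\mathbb{Z}\times\mathbb{Z}$ together with a $(2,2)$ broadcast dominating set $S$ that is efficient in the sense just defined. Since $(t,r)=(2,2)$ means signal travels exactly one step and a vertex needs total reception $2$, the efficiency condition collapses to a very clean local requirement: every non-tower vertex must have reception exactly $2$, and every tower must have reception exactly $2$ as well (here $t-r=0$, so the first case of the definition applies to all vertices, forcing $r(u)=2$ everywhere). Because each tower emits signal $1$ to each of its out-neighbors, the requirement ``$\vec r(w)=2$ for all $w$'' says precisely: \emph{every vertex has exactly two towers among its in-neighbors} (counting a tower's own contribution of $2$ to itself when $w\in S$, in which case it needs no further in-signal). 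So the entire problem reduces to designing, for each density, a periodic pattern of towers on $\mathbb{Z}\times\mathbb{Z}$ and a compatible orientation of the grid edges so that this exact-reception condition holds at every vertex.

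\textbf{The plan.}

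The plan is to treat each of the three densities by producing an explicit periodic tile and orientation, then verifying the local reception condition on each orbit representative. First I would fix a fundamental domain (a small rectangle tiling the plane by translation) and declare which vertices in it are towers, giving the stated density as (number of towers)/(area of tile). For $\frac13$ I would use a diagonal-stripe pattern where one in every three cells along an appropriate lattice direction is a tower, spacing the towers so each non-tower lies adjacent to exactly one tower in the undirected sense; for $\frac12$ a checkerboard-type pattern; for $\frac23$ a pattern where towers form the denser complement. For each pattern I then orient the edges: the governing principle (exactly as in the finite $(2,2)$ observation earlier in the paper) is to make towers act as sources pushing signal out and to route the edges so that every non-tower vertex collects in-signal from precisely two tower in-neighbors and no more, while every tower vertex receives in-signal exactly $0$ from other towers (so its self-reception $2$ is not exceeded). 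The verification step is then finite: by periodicity it suffices to check the reception equality $\vec r(w)=2$ for each vertex in one fundamental domain, of which there are only finitely many distinct types up to the lattice symmetry.

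\textbf{The main obstacle.}

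The delicate point is not achieving domination but achieving \emph{efficiency} simultaneously with a \emph{consistent} orientation. In the undirected efficient $(2,2)$ problem each non-tower must see exactly two towers; in the directed setting I additionally get to choose, and must choose coherently, which of a vertex's incident edges point inward. The hard part will be verifying that a single global orientation can be chosen so that (i) every non-tower vertex has exactly two tower in-neighbors \emph{and} (ii) every tower vertex has exactly zero tower in-neighbors (no tower is ``downstream'' of another tower), since an edge between two adjacent vertices forces a direction that must be compatible with both endpoints' requirements at once. I would resolve this by checking that in each chosen pattern the towers are placed so that no two towers are adjacent (making condition (ii) automatic, as tower-tower edges never occur), and that the bipartite-like incidence between towers and non-towers admits the needed orientation; concretely I expect each pattern to be displayed as a labeled figure with arrows, and the proof to amount to confirming on the figure, orbit by orbit, that the reception sum at every vertex equals exactly $2$. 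The only real risk is an off-by-one in a boundary cell of the fundamental domain wrapping incorrectly under periodicity, which the orbit-by-orbit check is designed to catch.
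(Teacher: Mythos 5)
Your overall strategy---reduce efficiency for $(t,r)=(2,2)$ to the local condition ``reception exactly $2$ at every vertex,'' exhibit a periodic tower pattern and orientation for each density, and verify on a fundamental domain---is exactly the route the paper takes; its proof consists of pointing to explicit periodic orientations in a figure and checking rows. Your reduction is faithful to the stated definition: since $t-r=0$, only the first case of the efficiency definition ever applies, so every vertex (towers included) must have reception exactly $2$, and since a tower already contributes $2$ to itself, no tower may have another tower as an in-neighbor. Your $\frac13$ and $\frac12$ constructions can be completed along these lines (for instance, towers at $\{(x,y): x+2y\equiv 0 \pmod 3\}$ with all tower--non-tower edges oriented outward give density $\frac13$; the checkerboard with horizontal edges oriented tower-to-non-tower and vertical edges oriented non-tower-to-tower gives density $\frac12$).

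The genuine gap is the $\frac23$ case, and it is fatal to your plan as written. You propose to make condition (ii) automatic by arranging that ``no two towers are adjacent,'' i.e.\ that the tower set is independent in the underlying undirected grid. But the maximum density of an independent set in $\mathbb{Z}\times\mathbb{Z}$ is $\frac12$ (each domino in a domino tiling contains at most one tower), so no pattern of density $\frac23$ can satisfy your own condition (ii): some edge joins two towers, and whichever way it is oriented, one of them acquires reception $2+1=3\neq 2$. Consequently the verification you describe provably cannot succeed for the third density under the reception-equals-$2$-everywhere reading that you (correctly) derived from the printed definition. The paper's density-$\frac23$ orientation necessarily contains adjacent towers as well, so it is ``efficient'' only under a laxer reading in which the reception constraint is imposed at non-broadcasting vertices and excess reception at towers is tolerated. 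To complete your proof you must either adopt and state that weaker notion of efficiency explicitly for the $\frac23$ construction, or concede that the third density is unattainable under the definition as literally written; as it stands, your argument establishes only the $\frac13$ and $\frac12$ claims.
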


\begin{proof}
The proof is via construction. To give each of these results, consider the orientations provided in Figure~\ref{img:infinite,density1/3:2/3} and note that the number of vertices used in every horizontal line are $\frac13$ and $\frac23$ of the total number of vertices, respectively.
\end{proof}

\begin{figure}[h!]
    \centering
    \includegraphics[width = \textwidth]{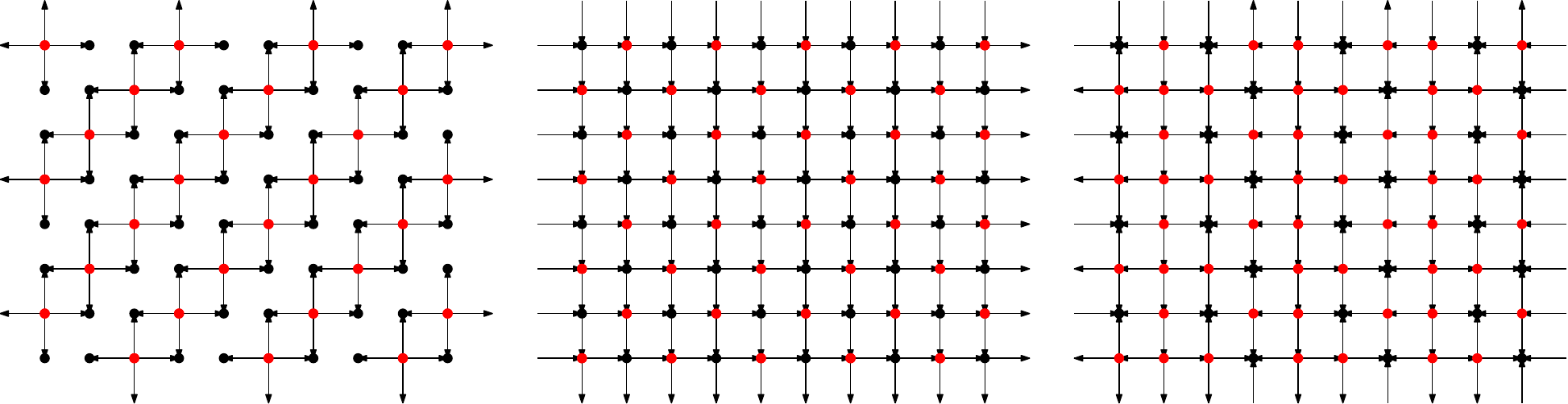}
    \caption{Left to right: orientations of the infinite grid which efficiently achieve $(2,2)$ broadcast vertex density $\frac{1}{3}, \frac12, \frac23$, respectively. In the leftmost graph, some arcs are not shown to denote that these arcs may be oriented arbitrarily without changing the density.} \label{img:infinite,density1/3:2/3}
\end{figure}

Motivated by Theorem~\ref{thm:one_third_two_third} and our prior study of the directed $(t,r)$ broadcast domination interval, we pose the following open problems for further study. 

\begin{que}\label{ques:one_third_two_third}
Does every rational number $d \in \left[\frac13,\frac23\right]$ appear as a $(2,2)$ directed broadcast domination density? If not, classify the rational numbers in that interval that do appear.
\end{que}

\begin{que}\label{ques:ralph}
Is there a $(2,2)$ directed broadcast domination density $d$ such that the density $d$ is irrational, i.e., $ d \in  \left[\frac13,\frac23\right] \cap \QQ^\text{c} ?$ 
\end{que}

One may also ponder the implications of periodic tower placement on various topological surfaces.

\begin{que}\label{question:Adams}
Do the three $(t,r)$ directed broadcast dominating sets given in Figure \ref{img:infinite,density1/3:2/3} result some $(t,r)$ broadcast domination set on a finite toroidal grid? More generally, for any orientation of the grid which results in a rational density of towers, is it possible to realize that orientation and domination density on the toroidal grid?
\end{que}

We provide a partially negative answer to Question~\ref{question:Adams} by noting that the infinite grid achieving tower density~$\frac{1}{3}$ in Figure~\ref{img:infinite,density1/3:2/3} can be oriented to have an aperiodic orientation (by orienting the missing arcs accordingly), but this orientation always has rational tower density. We know that regardless of the orientation of the missing arcs, the resulting $(t,r)$ broadcast dominating set remains efficient because those missing arcs carry no signal. Of course, this infinite grid may also be oriented periodically to achieve the same effect, and we note that one reason for this flexibility in orienting the arcs is that no signal travels along these arcs. In other words, for each of these arcs $(u,v)$ left blank in Figure~\ref{img:infinite,density1/3:2/3}, the signal at the tail $u$ is only 1, so the signal at $v$ coming from $u$ is~0. 

Still, it is the case that all three of these domination densities are realizable as dominating sets on the same toroidal grid, and it is important to note that this fact might help us determine which rational numbers are possible dominating densities. Exploring this further may provide connections to topology and also provide an answer to Question~\ref{ques:one_third_two_third}.

Interestingly, the orientations of the infinite grid depicted in Figure~\ref{img:infinite,density1/3:2/3} also grant a sub-interval of $\dbdi{2,2}{G_{m,n}}$, as we detail below.

\begin{prop}
Given integers $m, n$,
$$
\dbdi{2,2}{G_{m,n}} \supseteq \begin{cases} 
\left[\floor{\frac{mn}{3}}, \floor{\frac{2mn}{3}}\right] &\mbox{if } n \mod{3} = 0\\
\left[\floor{\frac{mn}{3}}, \floor{\frac{2m(n-1)}{3}}\right] &\mbox{if } n \mod{3} = 1\\
\left[\floor{\frac{mn}{3}}, \floor{\frac{4mn + 5m}{6}}\right] &\mbox{if } n \mod{3} = 2\\
\end{cases}.
$$
\end{prop}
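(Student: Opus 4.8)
The plan is to prove the containment at the level of the interval's two endpoints. Writing $A_{2,2}(G_{m,n}) = \{\gamma_{2,2}(\vec{G}) : \vec{G}\in\orien{G}\}$ and $\dbdi{2,2}{G_{m,n}} = [d,D]$ with $d=\min A_{2,2}(G_{m,n})$ and $D=\max A_{2,2}(G_{m,n})$, a real-interval containment $[d,D]\supseteq[L,U]$ holds precisely when $d\le L$ and $D\ge U$. Thus it suffices to exhibit two orientations of $G_{m,n}$: one whose directed $(2,2)$ broadcast domination number is at most $L=\floor{mn/3}$ (bounding $d$), and one whose number is at least the case-dependent value $U$ (bounding $D$). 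By Theorem~\ref{thm:fullnessPart2} the interval is moreover full, so these two endpoint witnesses automatically realize every integer of $[L,U]$ as some $\gamma_{2,2}(\vec{G})$. The two witnessing orientations will be the restrictions to the $m\times n$ window of the efficient density-$\frac13$ and density-$\frac23$ orientations of the infinite grid from Theorem~\ref{thm:one_third_two_third} (Figure~\ref{img:infinite,density1/3:2/3}).

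For the lower endpoint I would restrict the efficient density-$\frac13$ orientation to $G_{m,n}$. In that orientation every tower has in-degree $0$ and every non-tower vertex has exactly two in-neighbouring towers, each contributing signal $2-1=1$, so every vertex has directed reception exactly $2$ and the tower set is a valid directed $(2,2)$ broadcast dominating set; on the finite window it places $\floor{mn/3}$ towers. The only vertices requiring care are those on the boundary whose two would-be tower in-neighbours lie partly outside the grid; these I would repair using the arcs left unspecified in Figure~\ref{img:infinite,density1/3:2/3} (orienting them inward), together with a bounded relocation of towers along the boundary arranged so that the total stays $\floor{mn/3}$. This yields an orientation $\vec{G}$ with $\gamma_{2,2}(\vec{G})\le\floor{mn/3}$, hence $d\le\floor{mn/3}$.

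For the upper endpoint I would use the density-$\frac23$ orientation, designed to force as many towers as possible. Partition the columns periodically into pairs of \emph{tower columns} followed by a single \emph{sink column}; orient every arc incident to a sink-column vertex inward (so each such vertex receives signal $1$ from each of its two neighbouring tower columns, for reception $2$), and orient the arcs internal to the tower columns so that each tower-column vertex has in-degree at most $1$. As in the proof of Proposition~\ref{prop:GridProps}, any vertex of in-degree at most $1$ must itself be a broadcasting vertex, so every tower-column vertex is forced while the sink columns are already dominated; hence the minimum directed $(2,2)$ dominating set of this orientation is exactly its set of tower-column vertices, and $D$ is at least their number. Counting these forced vertices according to $n \bmod 3$ — using the two leftover boundary columns in the residue-$1$ and residue-$2$ cases, where the boundary permits promoting extra columns to forced tower columns — should produce precisely $\floor{2mn/3}$, $\floor{2m(n-1)/3}$, and $\floor{(4mn+5m)/6}$ respectively. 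Combining $d\le\floor{mn/3}$ with $D\ge U$ then gives the stated containment.

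The main obstacle is the boundary bookkeeping, and in particular the residue-$2$ case, where the claimed value $\floor{(4mn+5m)/6}$ exceeds the interior density-$\frac23$ count $\floor{2mn/3}$ by roughly $5m/6$. Pinning this gain down requires showing that both leftover boundary columns (and a partial extra set of towers along them) can simultaneously be made in-degree $\le 1$ without over-counting, and that no orientation of these columns can force still more towers — the latter following from the handshake identity $\sum_{v} \deg^-(v) = |E(G_{m,n})|$, which caps how many vertices can have in-degree $\le 1$. Verifying that the lower-endpoint restriction genuinely retains a valid dominating set of size $\floor{mn/3}$ at the boundary, rather than incurring extra forced towers, is the other delicate point, and is where the analysis must be carried out carefully for each residue of $n$.
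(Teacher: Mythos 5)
Your proposal follows essentially the same route as the paper: restrict the density-$\frac13$ and density-$\frac23$ orientations of the infinite grid to an $m\times n$ window to witness the two endpoints, certify minimality of the density-$\frac23$ witness via the observation that any vertex of in-degree at most $1$ is forced to be a tower, and rely on the fullness of the directed $(2,2)$ interval (Theorem~\ref{thm:fullnessPart2}) to realize the intermediate values. The boundary and residue-class counting that you flag as the delicate step is likewise left unverified in the paper's own proof, so your attempt is, if anything, more explicit about where the remaining bookkeeping lies.
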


\begin{proof}
To achieve each of these values, align the leftmost column of $G_{m,n}$ with a column of the infinite graph of density 2/3 which contains only towers, as in Figure~\ref{fig:fittingGridIntoInfinite}. Now, using this starting column, bound a rectangular region of dimensions $m$ by $n$ to create the directed graph $\vec{G}_{m,n}$. 
We first claim that the set of towers within this region on the infinite graph also forms a dominating set of $\vec{G}_{m,n}$. Furthermore, we claim that this set achieves the domination number of $\vec{G}_{m,n}$. 

To establish the first claim, notice that all vertices within this region with in-degree 0 or 1 are dominating vertices. All other vertices, which have in-degree at least 2 and not greater than 4, are non-dominating vertices. This implies that the set of dominating vertices in $\vec{G}_{m,n}$ forms a dominating set of the graph. 
The second claim is established by the fact that every vertex with in-degree 0 or 1 must be a dominating vertex, as it cannot receive sufficient reception from only a single other vertex. 
Additionally, any other vertex (any vertex with in-degree greater than 1) is already not a dominating vertex. Thus, this set of dominating vertices is minimal.
\end{proof}

\begin{figure}
    \centering
    \includegraphics[width = 0.35\textwidth]{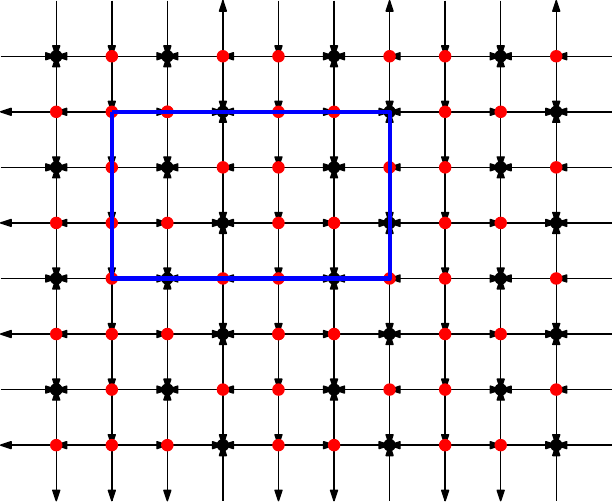}
    \caption{A region in which the leftmost column of a $4 \times 6$ grid graph is aligned with a column of the infinite grid containing only towers.}
    \label{fig:fittingGridIntoInfinite}
\end{figure}

It is also important to note that, much like in the case of finite graphs, there exist infinite grid graphs that cannot be oriented to preserve their $(t,r)$ broadcast dominating sets. One example is on the infinite triangular grid, which cannot be oriented to preserve the $(4,3)$ dominating set given by Harris et al.\ in \cite{harris2018broadcast}.
\begin{figure}[h!]
    \centering
    \includegraphics[width = 0.35\textwidth]{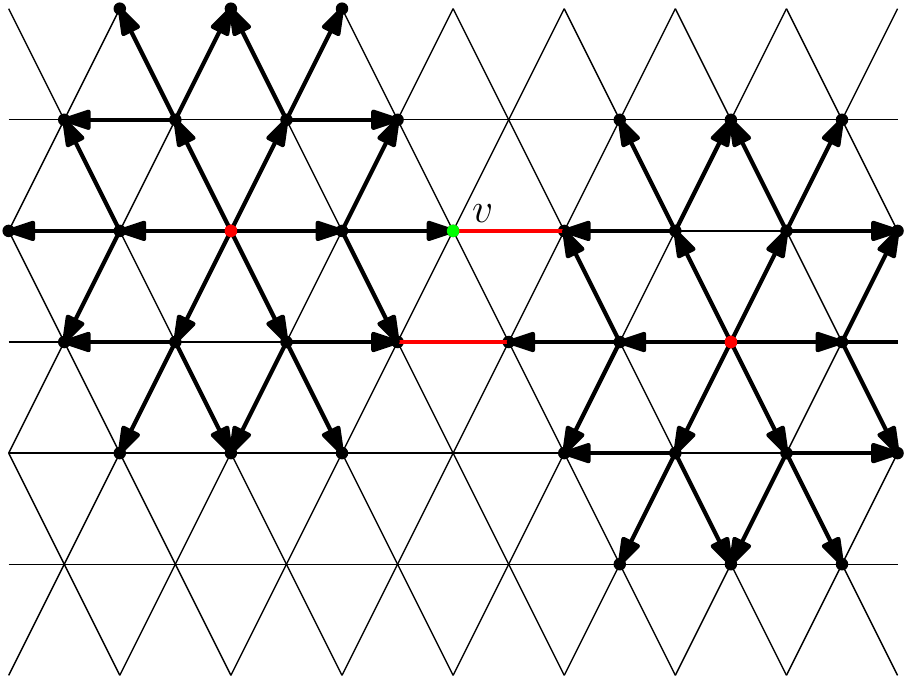}
    \caption{It is impossible to orient the triangular lattice and achieve the efficient $(4,3)$ dominating set. The problematic arcs are highlighted in red.}
    \label{img:(4,3)onTriLattice}
\end{figure}
\begin{ex} 
As pictured in Figure \ref{img:(4,3)onTriLattice}, it is impossible to orient the triangular lattice to preserve the dominating set given by Harris et al.\ in \cite{harris2018broadcast}. In order for the distance-3 out-neighborhood of each broadcasting vertex to be preserved, the black arcs in this figure must be oriented away from the broadcasting vertices. This figure shows just one possible orientation, as some of these black arcs may be reversed from their pictured orientations while preserving the distance-3 out-neighborhood of the dominating vertices. However, under no orientation of the red edges can signal travel ``both ways,'' which would be required for the set to be a $(4,3)$ dominating set. As pictured in Figure~\ref{img:(4,3)onTriLattice}, the reception at vertex $v$ would be $\vec{r}(v) = 3$ if the red edge incident to it were doubly-directed, but orienting that edge only allows for $\vec{r}(v) \leq 2$.
\end{ex}

\section{Future Directions}
\label{chap:futureDirections}

In Section~\ref{chap:broadcast_domination_interval}, we prove that the directed $(t,r)$ broadcast domination interval is full when $t \geq r = 1$ and in the isolated case $(t,r) = (2,2)$. We conjecture that this is true for arbitrary $t \geq r \geq 1$. One direction of study is proving or providing a counterexample to the following.

\begin{conj}
Let $t$ and $r$ be positive integers such that $t \geq r$. Given a graph $G$, let $\dbdi{t,r}{G} = [d,D]$, where 
\begin{align*}
& d=\min\{\gamma_{t,r}(\vec{G}):\vec{G} \text{ is an orientation of } G\} \text{ and } \\ 
& D=\max\{\gamma_{t,r}(\vec{G}): \vec{G} \text{ is an orientation of } G\}. 
\end{align*}
For every $b \in \Z \cap [d,D]$, there exists an orientation $\vec{G}_b$ with $\gamma_{t,r}(\vec{G}_b) = b$.
\end{conj}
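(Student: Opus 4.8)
The plan is to lift the discrete intermediate value argument that proves Theorems~\ref{thm:fullness} and~\ref{thm:fullnessPart2} to arbitrary $t \ge r \ge 1$. Concretely, let $\dbdi{t,r}{G} = [d,D]$ and fix orientations $\vec{G}_d$ and $\vec{G}_D$ attaining the two endpoints. Since any two orientations of $G$ differ in the directions of only finitely many arcs, there is a sequence $\vec{G}_d = H_0, H_1, \ldots, H_N = \vec{G}_D$ in which each $H_{i+1}$ is obtained from $H_i$ by reversing a single arc. If one could show that a single arc reversal never changes $\gamma_{t,r}$ by more than $1$ — the exact generalization of Lemma~\ref{lem: arcFlips} — then the integer sequence $\gamma_{t,r}(H_0), \ldots, \gamma_{t,r}(H_N)$ starts at $d$, ends at $D$, and has consecutive differences in $\{-1,0,1\}$, so by a discrete intermediate value argument it would pass through every integer in $[d,D]$, giving the conjecture.

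The main obstacle is that this generalized Lipschitz property is simply false once $r \ge 2$ and $t$ is allowed to grow: the paper's own Figure~\ref{fig:flipIncreaseBy2} exhibits a single $(5,3)$ arc reversal that raises $\gamma_{t,r}$ from $2$ to $4$. The mechanism is that for $r \ge 2$ a vertex may rely on the \emph{sum} of two or more decaying signals to clear the threshold $r$, so severing one directed path can drop several downstream vertices below $r$ at once, and re-dominating them can cost more than one new tower. The same phenomenon blocks the alternative route through Theorem~\ref{thm:fullnessISUREU}: its hypothesis~(1), transpose invariance $\beta(\vec{G}) = \beta(\vec{G}^T)$, already fails for $\gamma_{t,r}$ on the star (Figure~\ref{fig:starFailingCondition}), so that result cannot even be invoked to bound the spread, let alone to fill the interval.

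Because single flips can jump, I would not try to prove a global Lipschitz bound; instead I would weaken what is actually needed. Fullness does not require that \emph{every} edge of the flip-hypercube changes $\gamma_{t,r}$ by at most $1$, only that the image of $\gamma_{t,r}$ over all orientations has no gaps. So the target lemma I would aim for is a \emph{detour} statement: whenever reversing an arc $a$ changes $\gamma_{t,r}$ by $2$ or more, there is an alternative short path in the flip-hypercube between the same two orientations along which each step changes $\gamma_{t,r}$ by at most $1$. Intuitively one first pre-orients a downstream arc so that a single compensating tower is already correctly positioned before the disruptive flip, splitting a cost-$2$ jump into two cost-$1$ steps. Equivalently, one can try the existence formulation: starting from an orientation realizing the undirected optimum $d = \gamma_{t,r}(G)$ guaranteed by Observation~\ref{obs: inequality}, show that from any orientation with $\gamma_{t,r} < D$ one can reach, by a bounded number of flips, an orientation whose value is exactly one larger, and climb level by level to $D$.

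The hard part will be controlling this detour uniformly. When a flip severs the signal feeding a cluster of threshold-$r$ vertices, the number of extra towers needed and the interactions among their overlapping broadcasts must be analyzed for general $t$ and $r$, and it is not clear that a single compensating flip always suffices or that the intermediate value is always attainable without skipping. I expect this to require a careful local exchange and charging argument on broadcast dominating sets — tracking, for the affected out-component, how the deficit created by one reversal can be repaired one tower at a time — and it is precisely this step that the paper leaves open as a conjecture.
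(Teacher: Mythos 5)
This statement is left open in the paper as a conjecture --- the authors prove fullness only for $r=1$ (Theorem~\ref{thm:fullness}) and for $(t,r)=(2,2)$ (Theorem~\ref{thm:fullnessPart2}), and explicitly pose the general case as a direction for future work. Your proposal does not close that gap: it is a research plan, not a proof. You correctly diagnose why the direct route fails --- the single-flip Lipschitz property of Lemma~\ref{lem: arcFlips} breaks down for $r\ge 2$, as the paper's own Figure~\ref{fig:flipIncreaseBy2} shows with a $(5,3)$ reversal jumping $\gamma_{t,r}$ from $2$ to $4$, and Theorem~\ref{thm:fullnessISUREU} is unavailable because transpose invariance already fails on the star (Figure~\ref{fig:starFailingCondition}). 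That diagnosis matches the paper's own discussion of the obstruction.

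The genuine gap is the ``detour lemma'' you name but do not prove: that whenever one arc reversal changes $\gamma_{t,r}$ by $2$ or more, there is an alternative path in the flip-hypercube between the same two orientations along which every step changes $\gamma_{t,r}$ by at most $1$ (or, in your alternative formulation, that from any orientation below $D$ one can always reach an orientation with value exactly one larger). This is not a technical detail to be filled in later; it is the entire content of the conjecture, restated in a different guise. Fullness of the interval is \emph{equivalent} to the assertion that the image of $\gamma_{t,r}$ over the flip-hypercube has no gaps, and you have given no mechanism --- no exchange argument, no charging scheme, no structural control on how the deficit created by severing a directed path propagates through the out-component --- that would establish it for general $t\ge r\ge 2$. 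You acknowledge this yourself in your final paragraph. As written, the proposal should be understood as a reasonable framing of the open problem, not as a proof of the statement.
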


We also study directed $(t,r)$ broadcast domination on small grid graphs. There are many possible directions for further study, chief of which would be finding a closed formula for the directed $(t,r)$ broadcast domination interval of an arbitrary finite grid graph.

\begin{proj}
Let $t,r,m,n$ be positive integers, and let $t \geq r \geq 1$. Find a formula for $\dbdi{t,r}{G_{m,n}}$ as a function of $t,r,m,$ and $n$.
\end{proj}

Lastly in Section~\ref{chap:broadcast_domination_interval}, we study the directed $(t,r)$ broadcast domination interval for the star graph. In light of those results, one could study the directed $(t,r)$ broadcast domination interval of other graph families.

\begin{proj}
Let $t$ and $r$ be positive integers such that $t \geq r \geq 1$. For a family of graphs, find the $\interval{t,r}$ of that graph family as a function of $t$, $r$, and the number of vertices in the graph. Examples of interesting graphs to consider include cycles, tournaments (directed complete graphs), and spiders (a collection of paths which all share one endpoint).
\end{proj}

In Section~\ref{chap:infinite_grid} we introduce directed $(t,r)$ broadcast domination on the infinite grid, and we show an orientation of the infinite grid which efficiently achieves broadcasting vertex densities $\frac13$, $\frac12$, and $\frac{2}{3}$ when $(t,r) = (2,2)$. One future direction of study concerns answering the following question.

\begin{que}
Let $(t,r) = (2,2)$. For $d \in \Q \cap \left[\frac{1}{3}, \frac{2}{3}\right]$, does there exist an orientation of the infinite Cartesian grid which achieves tower density $d$? If not, for which rational numbers $d\in[\frac13,\frac23]$ can densities be achieved?
\end{que}

There are many other ways to study directed $(t,r)$ broadcast domination on the infinite grid. One potential direction could be to explore other known efficient $(t,r)$ broadcast dominating patterns (i.e., those studied in \cite{blessing2014tr}), asking whether there exist orientations of their respective infinite graphs which still allow that dominating pattern to form a directed $(t,r)$ broadcast dominating set. Another avenue for further study is to investigate other values of $t$ and $r$ not previously studied.

\section*{Acknowledgements}
P.~E.~Harris acknowledges funding support from  The Karen EDGE Fellowship Program and P.~Hollander thanks Williams College Science Center for funding in support of this research.

\bibliographystyle{plainnat}
\bibliography{Bibliography}

\end{document}